\newtheorem{thm}{Theorem}
\newtheorem{rmk}{Remark}
\newcommand{\rd}{\,\mathrm{d}}
\newcommand{\rev}[1]{\textcolor{blue}{#1}}
\newcommand{\bhat}{\widehat{\vb} }
\newcommand{\Chat}{\widehat{\vc} }
\newcommand{\bdot}{\dot{\vb}}
\newcommand{\Cdot}{\dot{\vc}}
\newcommand{\be}{\begin{equation}}
\newcommand{\ee}{\end{equation}}
\newcommand{\Dt}{\Delta t}
\newcommand{\cR}{{\cal R}}
\newcommand{\dt}{\Delta t}
\newcommand{\m}[1]{\mathbf{#1}}
\newcommand{\mR}{\m{R}}
\newcommand{\mP}{\m{P}}
\newcommand{\mW}{\m{W}}
\newcommand{\mD}{\m{D}}
\newcommand{\mDdot}{\m{\dot{D}}}
\newcommand{\mAdot}{\m{\dot{A}}}
\newcommand{\mA}{\m{A}}
\newcommand{\mAh}{\m{\hat{A}}}
\renewcommand{\v}[1]{\boldsymbol{#1}}
\newcommand{\vb}{\v{b}}
\newcommand{\vc}{\v{c}}
\newcommand{\ve}{\v{e}}
\newcommand{\sspcoef}{\mathcal{C}}
\newcommand{\DtFE}{\Dt_{\textup{FE}}}
\renewcommand{\v}[1]{\mathbf{#1}}
\title{High order  strong stability preserving multi-derivative implicit and
IMEX Runge--Kutta methods with  asymptotic preserving properties}
\author{%
Sigal Gottlieb\thanks{Mathematics Department, University of Massachusetts Dartmouth, North Dartmouth, MA 02747. Email: sgottlieb@umassd.edu.
SG's research was supported in part by AFOSR Grant No. FA9550-18-1-0383.} \and
Zachary J. Grant\thanks{Multiscale Methods and Dynamics Group, Mathematics in Computation subsection, Oak Ridge National Laboratory, Oak Ridge, TN 37830. Email: grantzj@ornl.gov.
This manuscript has been authored by UT-Battelle, LLC, under contract DE-AC05-00OR22725 with the US Department of Energy (DOE). The US government retains and the publisher, by accepting the article for publication, acknowledges that the US government retains a nonexclusive, paid-up, irrevocable, worldwide license to publish or reproduce the published form of this manuscript, or allow others to do so, for US government purposes. DOE will provide public access to these results of federally sponsored research in accordance with the DOE Public Access Plan (http://energy.gov/downloads/doe-public-access-plan).}
\and
Jingwei Hu\thanks{Department of Mathematics, Purdue University, West Lafayette, IN 47907. Email: jingweihu@purdue.edu. JH's research was supported in part by NSF CAREER grant DMS-1654152.} \and
Ruiwen Shu\thanks{Department of Mathematics, University of Maryland, College Park, MD 20742. Email: rshu@cscamm.umd.edu.}
}
\begin{document}
\maketitle


\bibliographystyle{siam}

\begin{abstract}  
In this work we present a class of high order  unconditionally strong stability preserving (SSP)
 implicit two-derivative Runge--Kutta schemes, and SSP implicit-explicit (IMEX) multi-derivative
 Runge--Kutta schemes where the time-step restriction is  independent of the stiff term. 
 The unconditional SSP property for a method of order $p>2$ is unique among SSP methods, and   
depends  on a backward-in-time assumption on the derivative of the operator. 
We show that this backward derivative condition
is satisfied in many relevant cases where SSP IMEX schemes are desired. 
 We devise unconditionally SSP implicit Runge--Kutta schemes of order up to $p=4$,
 and IMEX Runge--Kutta schemes of order up to $p=3$. For the multi-derivative IMEX schemes, we also derive
 and present the order conditions, which have not appeared previously.
 The unconditional SSP condition ensures that these methods are positivity preserving, and 
we present sufficient conditions under which such methods are also asymptotic preserving 
when applied to a range of problems, including a hyperbolic relaxation system, the Broadwell model, 
and the Bhatnagar-Gross-Krook (BGK) kinetic equation. 
We present numerical results to support the theoretical results, on a variety of problems. 
\end{abstract}

\section{Introduction}   \label{sec:intro}

Explicit strong stability preserving (SSP) Runge--Kutta methods were first developed for use  with 
total variation diminishing spatial discretizations for hyperbolic conservation laws with discontinuous solutions
\cite{Shu1988a,Shu1988b}. They have  proven useful in  a wide variety of problems where we need to evolve an ODE,
as they preserve any convex functional property satisfied by the forward-Euler method, while giving higher order
solutions.
Given a system of ODEs, generally resulting from a spatial discretization of a PDE, of the form
\begin{eqnarray} \label{ODE1}
u_t = G(u)
\end{eqnarray}
that satisfies some forward Euler condition 
\begin{eqnarray} \label{FEcond}
\hspace{-0.7in} {\mbox{\bf Forward Euler condition:} } & \nonumber \\
&  \| u + \dt G(u)\| \leq \| u \|  \; \; \;   \mbox{ for all } \;\;  \dt \leq  \DtFE, 
  \end{eqnarray}
where $\| \cdot \|$ is some convex functional (e.g. positivity).
In practice, we don't want to use Euler's method. Instead,
we desire a  higher order method that preserves the forward Euler condition,
perhaps under a modified time-step restriction $\dt \leq \sspcoef \DtFE$.
Higher order methods that can be written as convex combinations of forward Euler steps
with $\sspcoef >0$ will preserve the forward Euler condition, and are called SSP.
The value $\sspcoef $ is called the SSP coefficient, and we generally want to devise
methods that have a large $\sspcoef $.

When concerned with linear stability properties, we turn to implicit methods, 
or to implicit-explicit methods, to alleviate the time-step restriction. 
When considering the more strict SSP property, even implicit methods suffer from a step-size restriction 
 that is quite severe: the SSP coefficient is usually bounded by twice the number of stages for a Runge--Kutta 
 method \cite{KetchDW}. This is true for all implicit methods that have been tested: Runge--Kutta, multistep methods,
 and general linear methods. However, by using a second operator $\tilde{G}$ that approximates $G$ and
satisfies a downwind condition
\begin{eqnarray} \label{DWcond}
\hspace{-1in} \mbox{\bf Downwind condition:}&  \nonumber \\
 &  \| u - \dt \tilde{G}(u)\| \leq \| u \|  \; \; \;   \mbox{ for all } \;\;  \dt \leq \; \DtFE, 
 \end{eqnarray}
Ketcheson found a family of  implicit second order methods that are 
unconditionally SSP  \cite{KetchDW}. 
 
 In \cite{SD,TS} the SSP properties of  multi-derivative Runge--Kutta methods were studied. 
 For such methods, in addition to the forward Euler condition \eqref{FEcond},
we need some condition on the second derivative $\dot{G} = \frac{d G}{dt} = G' G $. 
One candidate was a second derivative condition  \cite{SD}:\\
 \noindent{\bf Second derivative condition:}  
\begin{equation} \label{SDcond}
 \| u + \dt^2 \dot{G}(u)\| \leq \| u \|  \; \; \;   \mbox{ for all } \;\;  \dt^2 \leq \tilde{k} \; \DtFE^2, 
\end{equation}
where $\tilde{k}>0$. 
The other possibility was a Taylor series condition \cite{TS}:\\
\noindent{\bf Taylor series condition:}
\begin{equation} \label{TScond}
 \| u +  \dt G(u) + \frac{1}{2}  \dt^2 \dot{G}(u)\| \leq \| u \|  \; \; \;  
 \mbox{ for all } \;\;  \dt \leq \hat{k} \; \DtFE, 
 \end{equation}
 where $\hat{k}>0$.
 \rev{Previously, explicit SSP two-derivative methods were developed that preserved the 
 forward Euler \eqref{FEcond} and second derivative \eqref{SDcond} conditions \cite{SD} or the 
 forward Euler  \eqref{FEcond} and Taylor series  \eqref{TScond} conditions \cite{TS}.
 However, unconditionally implicit methods that preserve the forward Euler condition \eqref{FEcond} 
cannot exist \cite{GST01}. Furthermore, the proof in \cite{GST01} can be easily applied to the
two-derivative case, to show  that there are no unconditionally implicit methods that preserve 
 \eqref{FEcond} and  \eqref{SDcond}, or  \eqref{FEcond} and  \eqref{TScond} (see Appendix A). 
 This leads us to consider the backward derivative condition as an alternative 
 to  \eqref{SDcond} and  \eqref{TScond}.}

To obtain  unconditional SSP methods, we consider in this work a new condition on the second derivative: 
\begin{eqnarray} \label{BDcond}
\hspace{-1.in} {\mbox{\bf Backward derivative condition:} } & \nonumber \\
& \hspace{-.9in}\rev{ \| u - \dt^2 \dot{G}(u)\| \leq \| u \| } \; \; \;   \mbox{ for all } \;\;  \dt^2 \leq \dot{k} \; \DtFE^2, 
 \end{eqnarray}
 for some $\dot{k}>0$. 
 Under this condition, we {\em require} negative coefficients on the derivative, and in this way are able to 
 obtain unconditionally SSP two-derivative Runge--Kutta methods.
In Subsection \ref{sec:MDRKSSP}, we show the conditions under which 
such an implicit two derivative method is unconditionally SSP, in the sense that it preserves the 
strong stability condition \eqref{BDcond} for any positive time-step $\dt$.
In Subsection \ref{sec:New_iMDRK} we proceed to present unconditionally SSP methods of this type 
of order up to $p=4$.

After establishing that unconditionally SSP  implicit two derivative Runge--Kutta 
methods of up to fourth order exist in Subsection \ref{sec:New_iMDRK}, we proceed to expand the theory in Subsection
\ref{sec:MDRKSSP} to implicit-explicit multi-derivative Runge--Kutta 
methods. We devise IMEX methods that are SSP under a time-step restriction resulting only from the operator treated
explicitly. We consider equations of the type
 \begin{eqnarray} \label{IMEXeqn}
 u_t= F(u) + G(u),
 \end{eqnarray}
 where $F$ and $G$ satisfy a forward Euler condition, and $\dot{G}$ satisfies a 
 backward derivative condition. Here, the condition on $F$  requires a reasonable size time-step, 
 but the condition on $G$ requires an inconveniently small time-step.
To alleviate this restriction, we present the multi-derivative IMEX approach in 
Section \ref{sec:2D-IMEX}, and give
sufficient  conditions under which we can ensure the method is SSP
under a time-step that depends only on $F$.  
We then derive the order conditions for multi-derivative IMEX methods. 
In Subsection  \ref{sec:new_methods} we present our new second and third order methods. 
A rich area of applications is described in Subsection \ref{sec:models}, 
where the backward derivative condition appears throughout.

One property that is desired in the problems presented in Subsection \ref{sec:models} is positivity for time-steps that 
depend only on $F$. Being SSP,  the methods in Subsection \ref{sec:new_methods} automatically 
preserve this positivity property. Furthermore, our methods satisfy an additional condition: 
that either or both $G$ and $\dot{G}$ appear in each stage. 
This condition is not needed for SSP (or, equivalently, positivity), but it is valuable
for an additional property that is of interest: they are asymptotic preserving, as we prove in Subsection \ref{APIMEX}.

Taken together, we present unconditionally SSP -- and thus positivity preserving --
methods: both implicit two-derivative Runge--Kutta methods
and IMEX multi-derivative Runge--Kutta methods, where the time-step restriction comes from the explicit part.
The IMEX methods are also asymptotic preserving, which is valuable for the problems in Subsection \ref{sec:models}.
These results are  significant, in that unconditionally SSP methods of order $p>1$ are rare. 
We are limited only by the fact that  the function and its derivative must satisfy 
a forward Euler  \eqref{FEcond}  and backward derivative \eqref{BDcond} conditions, respectively. 
While the forward Euler condition \eqref{FEcond} seems standard, 
the backward derivative condition \eqref{BDcond}  seems, at first glance, to be a bit unusual. 
However, there is a similarity between it and the downwinding condition \eqref{DWcond}.
Furthermore, it turns out that it is a natural condition, and quite useful for a variety of problems, 
as we show in Subsection \ref{sec:models}.

%
%

 \section{SSP implicit two-derivative Runge--Kutta methods}
 
In this section, we consider  two-derivative Runge--Kutta methods
 for the ODE
 \[u_t = G(u).\]
 As discussed in \cite{SD}, the two-derivative Runge--Kutta method 
 can be written in the Butcher form
 \begin{subequations} \label{MDRKButcher}
\begin{eqnarray}
&&u^{(i)}  =   u^n + \dt  \sum_{j=1}^{i} a_{ij} G(u^{(j)}) 
+  \dt^2 \sum_{j=1}^{i} \dot{a}_{ij} \dot{G}(u^{(j)}), \quad  \; \; i=1, . . . , s,   \\
&&u^{n+1} =  u^{(s)}.
 \end{eqnarray}
 \end{subequations}
In matrix form, this becomes
 \begin{eqnarray} \label{iRKmatrixButcher}
 U = \ve u^n   +  \dt \mA G(U) +  \dt^2 \mAdot  \dot{G}(U),
 \end{eqnarray}
 where $\ve$ is a vector of ones.
 
We proceed to define the order conditions of such a method in the next subsection.

 \subsection{Formulating the order conditions}
 
 Given the Butcher form \eqref{iRKmatrixButcher}, 
the vectors   $\vb$ and $\dot{\vb}$ are given by the last row of  $\mA$ and $\dot{\mA}$, respectively.
The vectors $\vc=\mA \ve$ and $\Cdot=\dot{\mA} \ve$ define the time-levels at which the stages are happening;
these values are known as the abscissas.   The order conditions for methods of this form are
given in \cite{SD} up to sixth order. We repeat them here up to fourth order.

\smallskip

 \begin{tabular}{ll}
$ p=1$: & $\vb^T \ve=1, $\\ \\
$ p=2$:  & $ \vb^T \vc+ \dot{\vb}^T \ve=\frac{1}{2},$ \\ \\
$ p=3$: & $ \vb^T \vc^2+2 \dot{\vb}^T \vc= \frac{1}{3} $, \; \; \; \;  
$\vb^T A \vc+ \vb^T  \dot{\vc}+ \dot{\vb}^T \vc= \frac{1}{6}, $ \\ \\
 $p=4$: & $\vb^T \vc^3 +3 \dot{\vb}^T \vc^2 =  \frac{1}{4}$, \; \; \;   
 $ \vb^T \vc A \vc+ \vb^T \vc \dot{\vc} +\dot{\vb}^T \vc^2 +  \dot{\vb}^T A \vc+  \dot{\vb}^T \dot{\vc}= \frac{1}{8}, $ \\ \\
	& $\vb^T A \vc^2 +2 \vb^T \dot{A} \vc+  \dot{\vb}^T \vc^2 = \frac{1}{12}, $ \\ \\
	& $\vb^T A^2 \vc+ \vb^T A \dot{\vc}+ \vb^T \dot{A} \vc+ \dot{\vb}^T A \vc+  \dot{\vb}^T \dot{\vc} = \frac{1}{24}. $ \\
 \end{tabular}
 \bigskip

\subsection{Strong stability preserving properties} \label{sec:MDRKSSP}
\rev{To ensure that a method of the form  \eqref{MDRKButcher}  does not 
result in  an SSP time-step restriction, } we write the
method in a special Shu-Osher form with only implicit computations
\begin{subequations} \label{MDRK}
\begin{eqnarray}
&&u^{(i)}  =  r_i u^n + \sum_{j=1}^{i-1} p_{ij} u^{(j)}   +
\dt d_{ii} G(u^{(i)}) +   \dt^2 \dot{d}_{ii} \dot{G}(u^{(i)}), \quad  \; \; i=1, . . . , s,   \\
&&u^{n+1} =  u^{(s)}.
 \end{eqnarray}
 \end{subequations}
\rev{ This form ensures that only implicit evaluations of $G$ and $\dot{G}$
are present, so that we do not have a time-step restriction due to a 
forward Euler, second derivative, or Taylor series term.
The form \eqref{MDRK} ensures that any explicit terms in the  method \eqref{MDRKButcher} 
enter only after they were introduced implicitly in a prior stage.
This is a necessary (but not sufficient)   condition so that an SSP time-step restriction will not occur
\cite{GKS11}}.

In matrix form, this becomes
 \begin{eqnarray} \label{iRKmatrix}
 U = \mR \ve u^n   +  \mP U + \dt \mD G(U) +  \dt^2 \mDdot  \dot{G}(U),
 \end{eqnarray}
 where  $ \mP$ and $\mR = I - \mP $ are $s \times s$ matrices, $r_i$ are the $i$th row sum of $\mR$,  
 and $\mD$ and $\mDdot$ are $s \times s$ diagonal matrices. 
 The numerical solution $u^{n+1} $ is then given by the final element of the vector $U$.
 Note that the relationship between the Butcher form \eqref{iRKmatrixButcher}
 and the Shu-Osher form \eqref{iRKmatrix} is given by 
 \[ \mA = \mR^{-1} \mD,  \; \; \;  \dot{\mA} = \mR^{-1} \dot{\mD}.\]
 \rev{Note that given a method of the form \eqref{iRKmatrixButcher},   it is not always possible to select some matrix of coefficients $\mR$ and thus obtain 
 matrices $\mP$, $ \mD$ and $\dot{\mD}$ where the matrices $\mD$ and $\dot{\mD}$ are
 diagonal. (However, if $\mA$ has only nonzero elements 
 on the diagonal then it is possible). 
On the other hand, is always possible to start from a two derivative method of the form 
 \eqref{iRKmatrix} and write it in the form \eqref{iRKmatrixButcher}.} 

 A method of the  form  \eqref{MDRK}  will be unconditionally SSP under the following conditions:
\begin{thm}  \label{thm:SSPMDRK} 
Let the operators $G$ and $\dot{G} $  satisfy 
 the forward Euler condition
 \[ \| u + \dt G(u)\| \leq \| u \|  \; \; \;   \mbox{ for all } \;\;  \dt \leq  \DtFE\]
 and the backward derivative condition
\[  \| u - \dt^2 \dot{G}(u)\| \leq \| u \|  \; \; \;   \mbox{ for all } \;\;  \dt^2 \leq \dot{k} \; \DtFE^2,\]
for some $\DtFE>0$ and $\dot{k}>0$, and 
 for  some convex functional $\| \cdot \|$.
A method given by  \eqref{iRKmatrix} which  satisfies the  conditions
\begin{eqnarray} \label{coef}
\mR \ve  \geq  0, \; \; \; \;  \mP  \geq  0, \; \; \;  \mD \geq  0, \; \; \; \; \dot{\mD} \leq  0, 
 \end{eqnarray}
 \rev{(where the inequalities are understood componentwise),}
will preserve the strong stability property
\[  \|u^{n+1} \| \leq \| u^n \|  \]
for any positive time-step $\dt >0$.
\end{thm}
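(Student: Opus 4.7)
The plan is to bound $\|u^{(i)}\|$ stage-by-stage via induction on $i$, using a convex-combination trick in the spirit of Ketcheson's treatment of implicit downwind methods \cite{KetchDW}. First I would rearrange the $i$-th Shu--Osher stage \eqref{MDRK} into
\[
u^{(i)} - \dt\, d_{ii}\, G(u^{(i)}) - \dt^2\, \dot{d}_{ii}\, \dot{G}(u^{(i)}) \;=\; r_i\, u^n + \sum_{j<i} p_{ij}\, u^{(j)} \;=:\; v_i.
\]
Because $\mR+\mP = I$ with both matrices componentwise nonnegative, the weights satisfy $r_i + \sum_{j<i} p_{ij} = 1$, so $v_i$ is already a convex combination of $u^n$ and previously computed stages. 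The remaining task is to promote the implicit identity above into the explicit bound $\|u^{(i)}\| \le \|v_i\|$.

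The key step is to introduce two auxiliary quantities
\[
\tilde{u}_1 := u^{(i)} + \DtFE\, G(u^{(i)}), \qquad
\tilde{u}_2 := u^{(i)} - \dot{k}\,\DtFE^2\, \dot{G}(u^{(i)}),
\]
which by the forward Euler condition \eqref{FEcond} and the backward derivative condition \eqref{BDcond} satisfy $\|\tilde{u}_1\|\le\|u^{(i)}\|$ and $\|\tilde{u}_2\|\le\|u^{(i)}\|$. With $\beta := d_{ii}\dt/\DtFE \ge 0$ and $\gamma := -\dot{d}_{ii}\dt^2/(\dot{k}\,\DtFE^2) \ge 0$ (where the sign hypotheses $d_{ii}\ge0$ and $\dot{d}_{ii}\le0$ are essential), I would add to the stage identity $\beta$ times the defining relation of $\tilde u_1$ and $\gamma$ times the defining relation of $\tilde u_2$. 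The coefficients are engineered so that $G(u^{(i)})$ and $\dot{G}(u^{(i)})$ cancel out exactly, leaving
\[
(1+\beta+\gamma)\, u^{(i)} \;=\; v_i + \beta\, \tilde u_1 + \gamma\, \tilde u_2.
\]
Dividing by $1+\beta+\gamma$ displays $u^{(i)}$ as a convex combination of $v_i$, $\tilde u_1$, and $\tilde u_2$. Applying the convex functional and using $\|\tilde u_k\|\le\|u^{(i)}\|$ yields $\|u^{(i)}\| \le \tfrac{1}{1+\beta+\gamma}\|v_i\| + \tfrac{\beta+\gamma}{1+\beta+\gamma}\|u^{(i)}\|$, which rearranges to $\|u^{(i)}\|\le\|v_i\|$. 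Combining with $\|v_i\|\le r_i\|u^n\|+\sum_{j<i}p_{ij}\|u^{(j)}\|$ and the inductive hypothesis $\|u^{(j)}\|\le\|u^n\|$ for $j<i$ (the base case $i=1$, for which $r_1=1$ and the sum is empty, is immediate) gives $\|u^{(i)}\|\le\|u^n\|$, and in particular $\|u^{n+1}\|=\|u^{(s)}\|\le\|u^n\|$ for any $\dt>0$.

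The main obstacle is spotting the auxiliary quantities $\tilde u_1, \tilde u_2$ and the precise linear combination that simultaneously kills both implicit derivative contributions while producing nonnegative weights; this is exactly the point at which the sign hypothesis $\dot{d}_{ii}\le0$, paired with the backward derivative condition \eqref{BDcond}, earns its keep---had we instead imposed \eqref{SDcond} or \eqref{TScond}, the needed $\gamma$ would turn negative for large $\dt$, destroying the convex combination and reproducing the classical obstruction of \cite{GST01}. Once the combination is identified, the remaining algebra and induction are routine.
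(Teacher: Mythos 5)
Your proposal is correct and follows essentially the same convex-combination argument as the paper's proof: the paper adds $(\alpha+\beta)u^{(i)}$ to both sides with $\alpha,\beta$ taken arbitrarily large, whereas you fix the weights at $\beta = d_{ii}\dt/\DtFE$ and $\gamma = -\dot d_{ii}\dt^2/(\dot k\,\DtFE^2)$ so that the forward Euler and backward derivative conditions apply with their maximal admissible step; this is the same mechanism with a concrete (and arguably cleaner) choice of the free parameters. The induction over stages and the treatment of the explicit convex combination $v_i$ likewise mirror the paper.
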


\begin{proof}
\rev{The first stage of the method is given by
\[
 u^{(1)}    =  u^n +   \dt d_{11}  G(u^{(1)})  +  \dt^2  \dot{d}_{11}  \dot{G}(u^{(1)}) .
\]
 Using the forward Euler and backward derivative conditions, we can show that 
 $\| u^{(1)} \| \leq \| u^n \|$, whenever $d_{11} \geq 0$ and $\dot{d}_{11} \leq 0$.
 To see this add $(\alpha + \beta ) u^{(1)}$ to both sides and rearrange
 \begin{align*}
 u^{(1)} &=  \frac{u^n}{1+\alpha + \beta}  
 +   \frac{\alpha}{1+\alpha + \beta}  \left(  u^{(1)} +  \frac{1}{\alpha} \dt  {d}_{11}  {G}(u^{(1)}) \right)  \\
&  +  \;  \frac{\beta}{1+\alpha + \beta}  \left(  u^{(1)} -  \frac{1}{\beta} \dt^2  |\dot{d}_{11}|  \dot{G}(u^{(1)}) \right) .
  \end{align*}
 Assuming that $\alpha \geq 0$ and $\beta \geq 0$ we have (from the forward Euler condition and backward derivative condition) 
\[ \| u^{(1)} \| 
   \leq  \frac{1}{1+\alpha + \beta}\left\|  u^n  \right\|  +\frac{\alpha}{1+\alpha + \beta}   \left\|   u^{(1)}  \right\| 
 +  \frac{\beta}{1+\alpha + \beta}  \left\|    u^{(1)}  \right\|, 
\]
 hence
\[  \| u^{(1)} \| \leq  \| u^n\|,\]
  for any $\dt$ such that 
  $\frac{1}{\alpha} \dt d_{11} \leq  \DtFE$ and $  \frac{1}{\beta}  |\dot{d}_{11} | \dt^2 \leq \dot{k} \DtFE^2$. 
Since we can choose $\alpha$ and $\beta$ to be arbitrarily large then this is true for any $\dt$.
 }
 
 \rev{Each subsequent stage of the method is given by
\begin{equation*}
u^{(i)}     = \left( r_i u^n + \sum_{j=1}^{i-1} p_{ij} u^{(j)} \right) 
 +     \dt d_{ii} G(u^{(i)})  + \dt^2 \dot{d}_{ii} \dot{G}(u^{(i)}) ,
\end{equation*}
where we  can now assume that $  \| u^{(j)} \| \leq  \| u^n\|,$ for all $j < i$.
The explicitly computed terms are 
\begin{align*}
 \| u_e^{(i)} \| & =   \left\| r_i u^n + \sum_{j=1}^{i-1} p_{ij} u^{(j)} \right\|  
  \leq    \| r_i u^n\|  +  \left\| \sum_{j=1}^{i-1} p_{ij} u^{(j)} \right\|  \\
 & \leq    r_i  \|u^n\|  +  \sum_{j=1}^{i-1} p_{ij}  \| u^{(j)}\|  
\leq \left( r_i +  \sum_{j=1}^{i-1} p_{ij} \right) \|u^n\|  \\
  & = \|u^n\|.
 \end{align*}}\rev{ due to the  non-negativity of $r_i$ and $p_{ij}$, and the fact that they sum to one.
Note that this condition is  independent of  $\dt$. 
Finally we write each stage as 
\begin{equation*}
 u^{(i)}    =  u_e^{(i)} +   \dt d_{ii}  G(u^{(i)})  +  \dt^2  \dot{d}_{ii}  \dot{G}(u^{(i)}) ,
\end{equation*}
and use the same argument as for the first stage above to show that 
$ \| u^{(i)} \| \leq \| u_e^{(i)} \|  \leq \| u^n\| $ under any  time-step $ \dt$,
provided only that $d_{ii} \geq 0$ and $\dot{d}_{ii} \leq 0$.}
\end{proof}

\smallskip

\subsection{New  SSP implicit two-derivative Runge--Kutta methods up to order $p=4$\label{sec:New_iMDRK}} 
We found second,  third, and fourth order methods that satisfy the conditions above,
and are unconditionally SSP.

\noindent{\bf Second order} The one-stage, second order 
method is simply the implicit Taylor series method
\[ u^{n+1} = u^n + \dt G(u^{n+1}) - \frac{1}{2} \dt^2 \dot{G}(u^{n+1}) .\]

\noindent{\bf Third order}
A two-stage, third order unconditionally SSP implicit two-derivative Runge--Kutta method
is given by the Shu-Osher coefficients
\[ \mD = \left[ \begin{array}{cc}
0 & 0 \\ 0 & 1 \\
\end{array}\right], \; \; \; 
\mDdot = \left[  \begin{array}{rr}
-\frac{1}{6}& 0 \\ 0 & -\frac{1}{3}  \\
\end{array} \right], \; \; \; 
\mP= \left[  \begin{array}{rr}
0 & 0 \\ 1  & 0 \\
\end{array} \right], \; \; 
\mR \ve = \left[  \begin{array}{c}
1 \\
0 \\
\end{array} \right],
\]
and the Butcher coefficients
\[ \mA = \left[  \begin{array}{cc}
0 & 0 \\ 0 & 1 \\
\end{array} \right], \; \; \; 
\mAdot = \left[  \begin{array}{rr}
  - \frac{1}{6}    &            0 \\
  - \frac{1}{6} &    - \frac{1}{3} \\
\end{array} \right].
\]

\noindent{\bf Fourth order} A five-stage, fourth order unconditionally 
SSP implicit two-derivative Runge--Kutta method
is given by the Shu-Osher coefficients
\[ diag(\mD) = \left[  \begin{array}{c}
   0.660949255604937\\
   0.242201390400848\\
   1.137542996287740\\
   0.191388711018110\\
   0.625266691721946\\
 \end{array} \right], \; \; \;
diag(\mDdot) = \left[  \begin{array}{c}
-0.177750705279127 \\         
-0.354733903778084 \\
-0.403963513682271  \\
-0.161628266349058 \\
 -0.218859021269943 \\
 \end{array} \right],
 \]
 \[P=\left[  \begin{array}{ccccc}
   0                   		 & 0                  & 0         &          0      &     \hspace{.2in}        0 \\
   1                   		 & 0                  &  0         &          0     &         \hspace{.2in}         0 \\
   0.084036809261019 &  0.915963190738981     &              0         &          0     &       \hspace{.2in}           0 \\
   0.001511648458457  &    0  & 0.090254853867587  &                 0          &     \hspace{.2in}        0 \\
   0          &       0 &  0 & 1 &  \hspace{.2in}      0 \\
 \end{array}  \right], \]
 \[  \mR \ve = \left[    1, 0, 0,   0.908233497673956, 0 \right]^T .\]
 And Butcher coefficients
 \[ a_{ii} = d_{ii}, \; \; a_{12} = a_{13} = a_{11}, \; \; \; a_{14} = a_{15} =   0.060653001401867,\]
  \[ a_{23} = 0.221847558352979,  \;  a_{24}=a_{25} = 0.020022818960029, \]
\[  a_{34}=a_{35} = 0.102668776898047, \; \; a_{45} = a_{44},\] 
 and 
 \[ \dot{a}_{ii} = \dot{d}_{ii} , \; \;   \dot{a}_{12} = a_{13} = \dot{a}_{11}, \; \; \; \dot{a}_{14} = \dot{a}_{15} = -0.016311560509453,  \]
 \[ \dot{a}_{23} = - 0.324923198367868, \; \; \dot{a}_{24}= \dot{a}_{25} = - 0.029325895786881, \]
 \[  \dot{a}_{34}=\dot{a}_{35} =  - 0.036459667895230, \; \; \dot{a}_{45} = \dot{a}_{44}.\] 
 
 We were unable to find any fifth order methods that satisfy the conditions in Theorem
 \ref{thm:SSPMDRK}.
 
\subsection{Numerical tests}

 \begin{figure}
 \begin{center}
        \includegraphics[width=.49\textwidth]{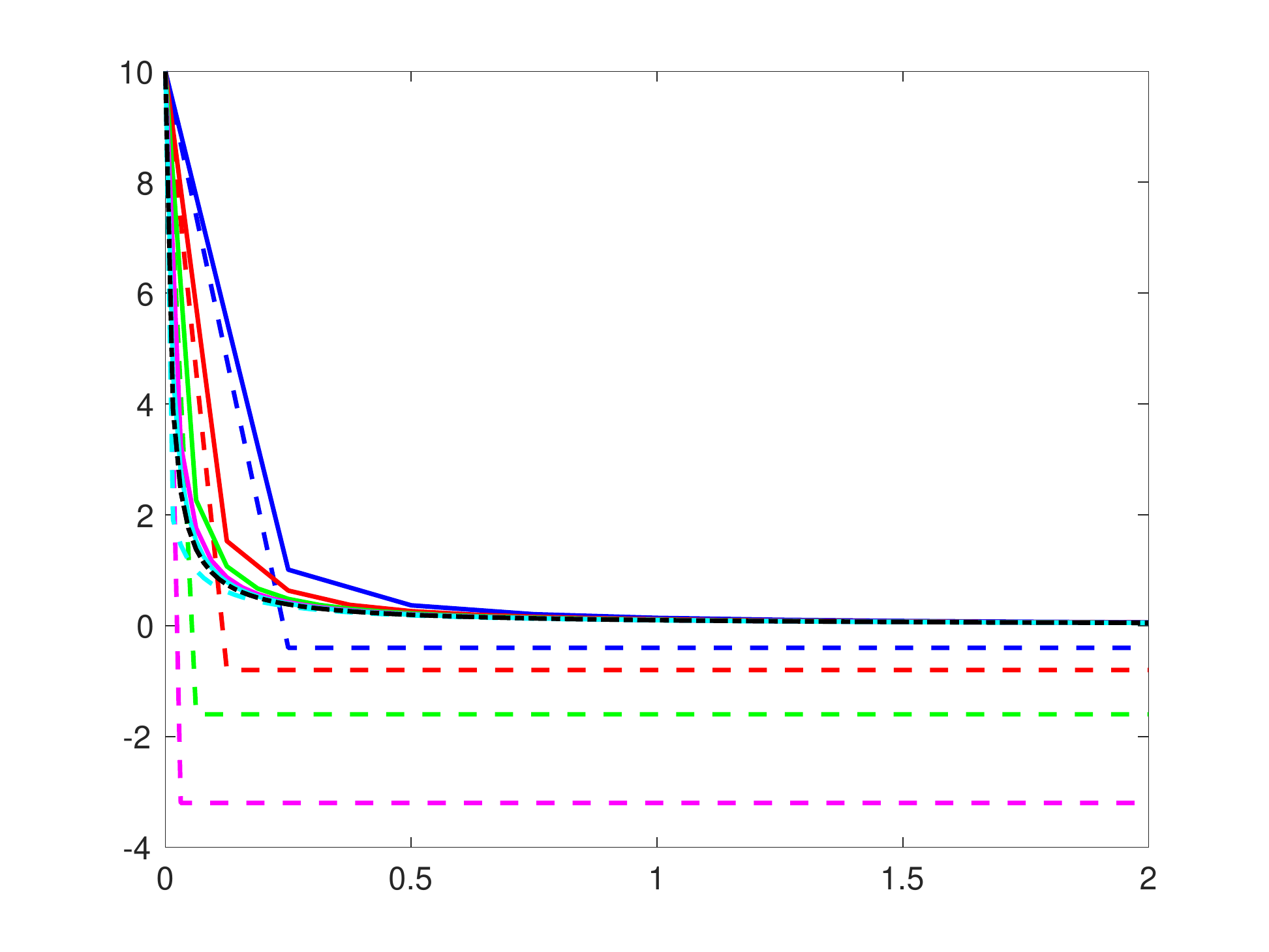} 
        \includegraphics[width=.49\textwidth]{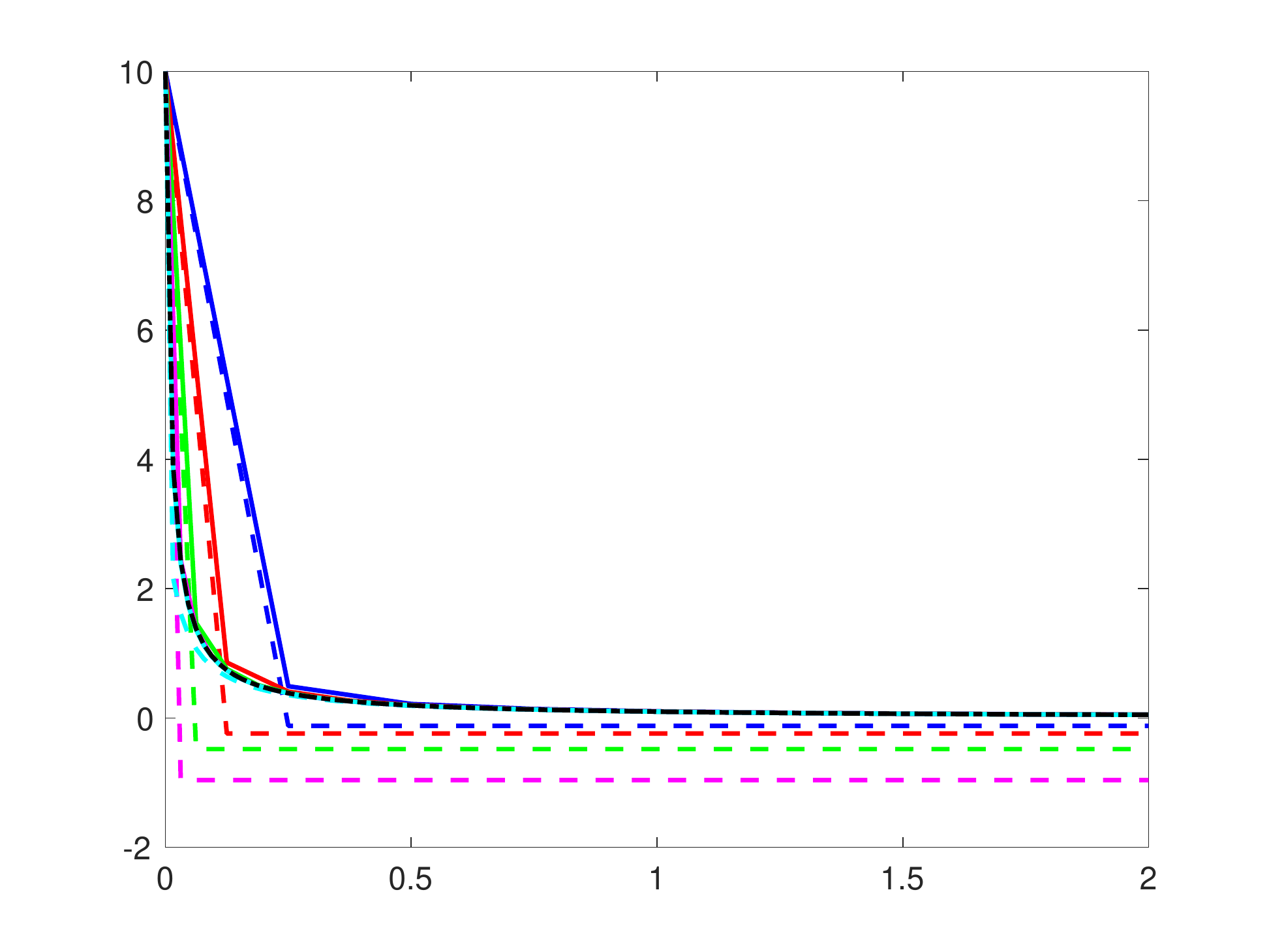}
        \caption{The solution of $u'=-10 u^2$ for the DIRK (dashed lines) and SSP-iMDRK (solid lines)
        compared to the correct solution (dash-dot line) for  $\dt= \frac{1}{n}$ where $n=4,8,16, 32, 64$
        in blue, red, green, magenta, and cyan, respectively.
        We see that if $\dt$ is not small enough the qualitative behavior of the numerical solution using 
        the DIRK methods is poor. However, the SSP-iMDRK methods converge to a solution that
        is qualitatively correct for all values of $\dt$ tested. Left: second order methods. Right: third order methods.
         \label{fig:iRKODE} }
         \end{center}
\end{figure}

We test all three of our methods on the nonlinear scalar  problem
\[ u_t = - 10 u^2,\] with initial condition \rev{$ u(0)= 10,$}
 with $T_{final} = 2$. Here, $G=  -10 u^2$ and $\dot{G} =  200 u^3$. This problem satisfies the forward Euler condition for positivity:
 \[ u^n >0 \; \; \Rightarrow \; \;   u^{n+1} = u^n + \dt G(u^n) = u^n \left( 1- 10 \dt u^n \right) > 0, \; \; \; \; \mbox{for} \; \;  \dt \leq \frac{0.1}{u^n},\]
 and the  backward derivative conditions for positivity: 
\[ u^n >0 \;  \Rightarrow \;   u^{n+1} = u^n - \dt^2 \dot{G}(u^n) = u^n \left( 1- 200 \dt^2 (u^n)^2 \right) > 0, \; \mbox{for} \; \;  \dt^2 \leq \frac{0.005}{ (u^n)^2}.\]
 Note that these restrictions induce a severe time-constraint, especially as $u^n$ is \rev{large}, on an explicit method. However, as
long as these (explicit-type) conditions hold for non-zero $\dt$, we preserve this positivity property {\em   unconditionally}
 for the implicit methods we found above. 
 
 We compare our second and third order methods in the subsection above 
 to diagonally implicit stiffly stable methods in the literature, with  Butcher tableau  \cite{CarpenterRK}
\begin{center}
\[ \begin{array}{c|cc} 
\multicolumn{3}{c}{\mbox{Second order}} \\ 
\multicolumn{3}{c}{\mbox{DIRK}} \\ [6pt] 
0 & 0 & 0 \\  [3pt]
1 & \frac{1}{2} & \frac{1}{2}   \\ [3pt]  \hline  \\ [-1pt]
&   \frac{1}{2} & \frac{1}{2}  \\ 
 \end{array}   \hspace{1in}
  \begin{array}{c|rrrr} 
\multicolumn{5}{c}{\mbox{Third order DIRK}} \\ [6pt] 
0 &  0 & 0 &  0 &  0 \\
 \frac{3}{2} & \frac{3}{4} &    \frac{3}{4} & 0  & 0 \\ [3pt]
\frac{7}{5} &   \frac{447}{675} &   - \frac{357}{675}  &  \frac{855}{675} &  0 \\ [3pt]
1&  \frac{13}{42} &   \frac{84}{42} &   - \frac{125}{42} &  \frac{70}{42} \\ [6pt]  \hline  \\ [-9pt]
&  \frac{13}{42} &   \frac{84}{42} &   - \frac{125}{42} &  \frac{70}{42} \\ 
 \end{array} \]
 \end{center}

As expected, the SSP methods preserve positivity up to a large time-step, while the DIRK methods 
lose positivity for relatively small time-steps.  The second order DIRK method loses positivity 
for $\dt > \frac{1}{50}$ and the third order for $\dt > \frac{1}{75}$.
This loss of positivity has significant consequences
to the convergence of the schemes. We see in  Figure  \ref{fig:iRKODE}  
that the DIRK methods  converge to a solution that is qualitatively \rev{poor}  if the time-step is not small. 
On the other hand, the unconditionally  SSP  methods  converge to a solution that is qualitatively
correct even for much larger time-steps.

 \section{Multi-derivative IMEX methods}    \label{sec:2D-IMEX}
 In this section we consider equations of the form \eqref{IMEXeqn}:
\[ u_t = F(u) + G(u),\] 
where  \rev{the time-step restriction coming from $F$ is of a reasonable size (i.e. $F$ is non-stiff), but
the time-step restriction coming from $G$ is very small (i.e. $G$ is stiff).}
We wish to alleviate this time-step restriction.
When dealing with linear stability, we typically turn to IMEX methods to
 alleviate the time-step restriction coming from $G$. However, when we consider 
 more general norms, semi-norms, or convex functionals, the use of IMEX schemes
{\em does not} result in the removal of the time step restriction caused by the operator $G$,
as shown in \cite{Higueras06,IMEX}. 
 \rev{Now that we have showed that unconditional multi-derivative SSP methods exist under 
 the backward derivative conditions, we wish to leverage this knowledge to develop
SSP IMEX methods that avoid a time-step restriction coming from $G$.
We do this by using an explicit SSP solver for the non-stiff term $F$, coupled with a
purely (or diagonally) implicit solver for the stiff term $G$.}

We assume that the operators $F$ and $G$ preserve some nonlinear stability properties under a convex functional $\| \cdot \|$:
\[ \mbox{\bf Condition 1:} \; \; \; \;  \| u + \dt F(u)\| \leq \| u \|  \; \; \;   \mbox{ for all } \;\; \dt \leq \DtFE, \]
for some $\DtFE>0$, and
\[ \mbox{\bf Condition 2:} \; \; \; \;   \| u + \dt G(u)\| \leq \| u \|  \; \; \;   \mbox{ for all } \;\; \dt \leq k\DtFE \]
for some $k>0$, which may be very small.

The backward derivative condition is natural and relevant in many cases (see Subsection  \ref{sec:models}); 
we assume that  $\dot{G}(u) = G'(u) G(u) $ satisfies:
\[  \mbox{\bf Condition 3:} \; \; \; \;   \| u - \dt^2 \dot{G}(u)\| \leq \| u \|  \; \; \;   \mbox{ for all } \;\;  \dt^2 \leq \dot{k} \; \DtFE^2, \]
(where $\dot{k}>0$ can be of any size). Just as above for the implicit methods, we can devise SSP 
IMEX methods where there is no time-step restriction coming from   $G$ or $\dot{G}$, so that the time-step restriction 
depends only on $F$. 


For problem \eqref{IMEXeqn}, we propose an $s$-stage multi-derivative IMEX method, written in the 
 Shu-Osher formulation, as follows
\begin{subequations} \label{IMEX-RK}
\begin{eqnarray}
&&u^{(i)}  =  r_i u^n + \sum_{j=1}^{i-1} p_{ij} u^{(j)}    + \sum_{j=1}^{i-1} w_{ij} \left( u^{(j)}   + \frac{\dt}{r} F(u^{(j)}  ) \right)  
\\
&& \hspace{0.8in}  + \dt d_{ii} G(u^{(i)}) +  \dt^2 \dot{d}_{ii} \dot{G}(u^{(i)}), \quad  \; \; i=1, . . . , s,   \nonumber \\
&&u^{n+1} =  u^{(s)}.
 \end{eqnarray}
 \end{subequations}
 \rev{The value of $r>0$ in the canonical Shu-Osher formulation gives us the 
 SSP coefficient of the explicit method.
 While at first glance it seems that requiring all the forward Euler steps in the method to have the same time-step
 $\frac{\dt}{r}$ is restrictive, in fact this form does not result in loss of generality, as discussed in \cite{GKS11}.
Note that  the terms $G$ and $\dot{G}$ appear only implicitly, so that there is no SSP restriction
arising from the implicit method.}
 
The intermediate stages can be conveniently written in a matrix form:
\begin{eqnarray} \label{IMEX-RKmatrix}
 U = \mR  \ve u^n + \mP U  +  \mW \left( U + \frac{\Delta t}{r} F(U) \right)  +  \dt \mD G(U) +  \dt^2 \mDdot  \dot{G}(U),
 \end{eqnarray}
 where  $ \mP$, $\mW$, and $\mR = I - \mP - \mW$ are $s \times s$ matrices, $r_i$ are the $i$th row sum of $\mR$,  
 $\mD$ and $\mDdot$ are $s \times s$ diagonal matrices, and $\ve$ is a vector of ones. The numerical solution $u^{n+1} $ is then given by the final element of the vector $U$.

 \subsection{SSP properties of multi-derivative IMEX Runge--Kutta} \label{sec:SSPIMEX}
 
  The Shu-Osher form allows us to easily observe the strong stability preserving properties of the method:
\begin{thm}  \label{thm:SSP} 
Given operators $F$ and $G$ that satisfy Conditions 1, 2, and 3, with 
values $\DtFE>0$, $k>0$, $\dot{k}>0$,
 for  some convex functional $\| \cdot \|$, 
and  if the method given by  \eqref{IMEX-RKmatrix} with $r>0$ satisfies the  \rev{componentwise} conditions
\begin{eqnarray} \label{coefIMEX}
\mR \ve  \geq  0, \; \; \; \;  \mP  \geq  0, \; \; \; \;  \mW  \geq  0, \; \; \;  \mD \geq  0, \; \; \; \; \dot{\mD} \leq  0, 
 \end{eqnarray}
then it preserves the strong stability property
\[  \|u^{n+1} \| \leq \| u^n \|  \]
under the time-step condition \[  \dt \leq r \DtFE. \]
\end{thm}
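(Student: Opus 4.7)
The plan is to adapt the stage-by-stage induction already used in the proof of Theorem~\ref{thm:SSPMDRK}, carrying along the extra explicit $F$-terms as additional forward-Euler substeps. I proceed by induction on the stage index $i$, with the inductive hypothesis $\|u^{(j)}\| \leq \|u^n\|$ for all $j<i$ (vacuously true for $i=1$).

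For each stage, I group the right-hand side of \eqref{IMEX-RK} into an \emph{explicit part}
\[
u_e^{(i)} \;=\; r_i u^n \;+\; \sum_{j=1}^{i-1} p_{ij}\, u^{(j)} \;+\; \sum_{j=1}^{i-1} w_{ij}\!\left( u^{(j)} + \tfrac{\dt}{r} F(u^{(j)}) \right),
\]
and the implicit remainder $\dt\, d_{ii} G(u^{(i)}) + \dt^2 \dot{d}_{ii} \dot{G}(u^{(i)})$. The non-negativity assumptions \eqref{coefIMEX} together with the identity $r_i + \sum_j p_{ij} + \sum_j w_{ij}=1$ (coming from $\mR=I-\mP-\mW$) ensure that $u_e^{(i)}$ is a convex combination of $u^n$, the previous stages $u^{(j)}$, and the forward-Euler quantities $u^{(j)}+\tfrac{\dt}{r}F(u^{(j)})$. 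Invoking the triangle inequality, the inductive hypothesis on $\|u^{(j)}\|$, and Condition~1 applied with step $\dt/r \leq \DtFE$ (which is exactly the claimed restriction $\dt \leq r\DtFE$), I conclude $\|u_e^{(i)}\| \leq \|u^n\|$.

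To absorb the implicit terms, I mimic the $\alpha,\beta$-trick from the proof of Theorem~\ref{thm:SSPMDRK}: adding $(\alpha+\beta)u^{(i)}$ to both sides of $u^{(i)} = u_e^{(i)} + \dt d_{ii} G(u^{(i)}) + \dt^2\dot{d}_{ii}\dot{G}(u^{(i)})$ and dividing by $1+\alpha+\beta$ rewrites $u^{(i)}$ as a convex combination of $u_e^{(i)}$, of $u^{(i)} + \tfrac{1}{\alpha}\dt d_{ii} G(u^{(i)})$, and of $u^{(i)} - \tfrac{1}{\beta}\dt^2 |\dot{d}_{ii}| \dot{G}(u^{(i)})$, using $d_{ii}\geq 0$ and $\dot{d}_{ii}\leq 0$. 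The first of these new terms has norm bounded by $\|u^{(i)}\|$ via Condition~2 as long as $\tfrac{\dt d_{ii}}{\alpha} \leq k\DtFE$, and the second via Condition~3 as long as $\tfrac{\dt^2 |\dot{d}_{ii}|}{\beta} \leq \dot{k}\DtFE^2$. Since $\alpha$ and $\beta$ are free parameters that can be chosen arbitrarily large, both restrictions are vacuous for any fixed $\dt$, and we obtain $\|u^{(i)}\| \leq \|u_e^{(i)}\| \leq \|u^n\|$. Setting $i=s$ gives the theorem.

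The only place a genuine time-step restriction enters is in the use of Condition~1 on the explicit $F$-substeps, which produces precisely $\dt \leq r\DtFE$. The main subtlety to verify carefully is that $u_e^{(i)}$ is indeed a convex combination, which is where the sign hypothesis $\mR\ve\geq 0$ (not just non-negativity of $\mP$ and $\mW$) is essential; the implicit portion then costs nothing asymptotically thanks to the $\alpha,\beta\to\infty$ limit, exactly as in the purely implicit case.
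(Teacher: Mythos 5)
Your proposal is correct and follows essentially the same route as the paper's proof: a stage-by-stage induction that bounds the explicit part $u_e^{(i)}$ as a convex combination of $u^n$, previous stages, and forward-Euler substeps of $F$ (yielding the restriction $\dt \leq r\DtFE$ via Condition~1), then absorbs the implicit $G$ and $\dot{G}$ terms with the $\alpha,\beta$-convex-combination trick from Theorem~\ref{thm:SSPMDRK}, letting $\alpha,\beta\to\infty$ so no additional restriction appears. No gaps.
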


\begin{proof}

Each stage of the method is 
\begin{eqnarray*} \label{RHS}
u^{(i)}     & = & \left( r_i u^n + \sum_{j=1}^{i-1} p_{ij} u^{(j)}  +
 \sum_{j=1}^{i-1} w_{ij} \left(  u^{(j)} + \frac{\dt}{r} F(u^{(j)}) \right)  \right) \\
& & +    \left( \dt d_{ii} G(u^{(i)})  + \dt^2 \dot{d}_{ii} \dot{G}(u^{(i)}) \right) .
 \end{eqnarray*}
\rev{In particular, the first stage is 
\[ u^{(1)}      =  u^n +   \left( \dt d_{11} G(u^{(i)})  + \dt^2 \dot{d}_{11} \dot{G}(u^{(i)}) \right) .\]
Following the argument in the Proof of Theorem \ref{thm:SSPMDRK}, we easily show that
$ \| u^{(1)} \| \leq \|u^n\|.$}

\rev{Now we assume that for the $i$th stage, we start with the $i-1$ previous stage values, each 
of which satisfy $\| u^{(j)} \| \leq \|u^n\| $.The explicit part of the $i$th stage is defined by}
\[
u_e^{i} =  r_i u^n + \sum_{j=1}^{i-1} p_{ij} u^{(j)}  +
 \sum_{j=1}^{i-1} w_{ij} \left(  u^{(j)} + \frac{\dt}{r} F(u^{(j)}) \right)  .
\]
 \rev{We note that this value depends only on previous stages and the operator $F$. }
Given   the non-negativity of all the coefficients \eqref{coef}  we can show that 
\begin{align*}
\| u_e^{i}  \| & =   \left\| r_i u^n + \sum_{j=1}^{i-1} p_{ij} u^{(j)}  +
 \sum_{j=1}^{i-1} w_{ij} \left(  u^{(j)} + \frac{\dt}{r} F(u^{(j)}) \right) \right\|  \\
 & \leq   r_i  \| u^n \| 
+  \sum_{j=1}^{i-1} p_{ij} \| u^{(j)} \| + 
 \sum_{j=1}^{i-1} w_{ij} \left\| u^{(j)} + \frac{\dt}{r} F(u^{(j)}) \right\| \\
 & \leq  r_i  \| u^n \| 
+  \sum_{j=1}^{i-1} p_{ij} \| u^{(j)} \| + 
 \sum_{j=1}^{i-1} w_{ij} \left\| u^{(j)} \right\|,
 \end{align*}
 for all $\dt \leq r \DtFE$. \rev{Now, recalling  that $\| u^{(j)} \| \leq \|u^n\|$ for $j < i$, we obtain 
$ \left\| u_e^{i}   \right\|  \leq \|u^n\|, $}
 from the condition $R+W+P =I$.\\
 \rev{We now have 
$u^{(i)}    =  u_e^{i}   +   \dt d_{ii}  G(u^{(i)})  +  \dt^2  \dot{d}_{ii}  \dot{G}(u^{(i)}) $
where $\left\| u_e^{i}   \right\|  \leq \|u^n\|$.
 Using Conditions 2 and 3 and the argument in the proof of Theorem \ref{thm:SSPMDRK} above, 
 we can show that} $\| u^{(i)} \| \leq \| u_e^{i}   \|$, whenever $d_{ii} \geq 0$ and $\dot{d}_{ii} \leq 0$,
 and so
$ \| u^{(i)} \| \leq \| u^n\| $ under the  time-step $ \dt \leq r \DtFE. $
\end{proof}

\smallskip

In Subsection \ref{sec:new_methods} we will show that it is indeed possible to find second and third order 
methods that satisfy the requirements in Theorem \ref{thm:SSP}. However, we first present the order conditions
a method of this form must satisfy. 

\subsection{Formulating order conditions} \label{sec:OC-IMEX}
The order conditions for a method \eqref{IMEX-RK}  are generally easier to formulate if the method is written in its Butcher form:
\begin{subequations} \label{IMEXRKButcher}
\begin{eqnarray}
u^{(i)}  & = & u^n  + \dt  \sum_{j=1}^{i-1} \hat{a}_{ij} F(u^{(j)})   + \dt  \sum_{j=1}^{i}  a_{ij} G(u^{(j)}) 
+  \dt^2 \sum_{j=1}^{i}  \dot{a}_{ij} \dot{G}(u^{(j)}), \\
&& \hspace{2in}  i=1, . . . , s,  \nonumber \\
\; \; \;  \; \; \; \; u^{n+1} & = & u^n  + \dt  \sum_{j=1}^{i-1} \hat{b}_{j} F(u^{(j)})   +  \dt \sum_{j=1}^{i}  b_{j} G(u^{(j)}) 
+ \dt^2 \sum_{j=1}^{i}  \dot{b}_{j} \dot{G}(u^{(j)}) .
 \end{eqnarray}
 \end{subequations}
To be consistent with (\ref{IMEX-RK}), we require that $u^{n+1} = u^{(s)}$, so that 
$ \hat{b}_{j} = \hat{a}_{sj}, \;  {b}_{j} = {a}_{sj}, \;    \dot{b}_{j} = \dot{a}_{sj}$.
The  intermediate stages of this method can be written in a matrix form:
\begin{equation}\label{IMEX-RKmatrix1}
U = \ve u^n +   \Delta t \widehat{\mA} F(U)+ \Delta t \mA G(U) +  \Delta t^2  \dot{\mA} \dot{G}(U). 
\end{equation}

The conversion between the two formulations (\ref{IMEX-RKmatrix})   and (\ref{IMEX-RKmatrix1}) is given by:
\begin{eqnarray}
\widehat{\mA}   =  \frac{1}{r} \mR^{-1} \mW, \quad  \mA  =  \mR^{-1} \mD , \quad  \dot{\mA}   =  \mR^{-1} \dot{\mD}.
\end{eqnarray}

The vectors $\widehat{\vb}$,  $\vb$, and $\dot{\vb}$ are given by the last row of $\widehat{\mA}$,  $\mA$, and $\dot{\mA}$, respectively.
The vectors $\vc=\mA \ve$, $\Cdot=\dot{\mA} \ve$, and $\Chat= \widehat{\mA} \ve$ define the time-levels at which the stages are happening;
these values are known as the abscissas.   The order conditions for methods of this form are:

\smallskip

\begin{minipage}{.45\textwidth}
{\renewcommand{\arraystretch}{1.4}
\begin{tabular}{|lc|} \hline 
{For $p \geq 1$} &
$\vb^t \ve=1$\\
& $\bhat^t \ve=1$ \\ \hline 
{For p $\geq$ 2}  &$\vb^t \vc + \bdot^t \ve=\frac{1}{2}$ \\
&$\vb^t\Chat =\frac{1}{2} $ \\
& $\bhat^t \vc =\frac{1}{2} $  \\
& $\bhat^t \Chat =\frac{1}{2}  $\\ \hline
{For $p \geq 3$}  & $ \vb^t\mA \vc + \bdot^t \vc + \vb^t \Cdot = \frac{1}{6}  $ \\
& $ \vb^t\mA\Chat + \bdot^t\Chat = \frac{1}{6}  $ \\
& $ \vb^t\widehat{\mA} \vc =\frac{1}{6} $ \\
& $ \vb^t\widehat{\mA}\Chat = \frac{1}{6} $ \\ \hline
\end{tabular}}
    \end{minipage}
\begin{minipage}{.45\textwidth}
{\renewcommand{\arraystretch}{1.4}
\begin{tabular}{|lc|} \hline
{For $p \geq 3$} 
& $ \bhat^t\mA \vc + \bhat^t \Cdot = \frac{1}{6} $ \\
(continued) & $ \bhat^t \mA \Chat  = \frac{1}{6} $ \\
& $ \bhat^t\widehat{\mA} \vc =\frac{1}{6} $ \\
& $ \bhat^t\widehat{\mA}\Chat = \frac{1}{6} $ \\ 
& $\vb^t(\vc\cdot \vc) + 2\bdot^t \vc = \frac{1}{3} $ \\
& $\vb^t( \vc\cdot \Chat) + \bdot^t \Chat = \frac{1}{3} $ \\
& $\vb^t( \Chat \cdot \Chat)  = \frac{1}{3} $ \\
& $\bhat^t(\vc\cdot \vc) = \frac{1}{3} $ \\
& $\bhat^t( \vc\cdot \Chat)  = \frac{1}{3} $ \\
& $\bhat^t( \Chat \cdot \Chat)  = \frac{1}{3}$ \\ \hline
\end{tabular}
}
\end{minipage}

\smallskip


\subsection{New SSP IMEX multi-derivative Runge--Kutta methods}\label{sec:new_methods}
Given functions $F$ and $G$ that satisfy Conditions 1-3, 
these IMEX methods have an explicit part that is SSP for a time-step that depends only on $F$,
and an implicit part that is unconditionally SSP. We will later show that these methods are 
positivity preserving and also asymptotic preserving for the problems described in 
Subsection \ref{sec:models}.

\subsubsection{Second order method}
We begin with a method that has Shu-Osher coefficients 
\[ \mW = \left[ \begin{array}{lll}
0 & 0 & 0 \\
1 & 0 & 0 \\
0 & 1/2 & 0 \\
\end{array} \right] , \; \; \; \;
 \mP =  \left[ \begin{array}{lll}
0 & 0 & 0 \\
0 & 0 & 0 \\
1/2 & 0 & 0 \\
\end{array} \right] , \; \; \; 
\mR\ve=  \left[ \begin{array}{l}
    1     \\
    0 \\
   0 \\
\end{array} \right] ,\]
and
\[ diag(\mD) = 
\left[ \begin{array}{c}
   \frac{1}{2}   \\
    0 \\
   \frac{1}{2}  \\
\end{array} \right] ,\;\;\;
diag(\dot{\mD}) =  - 
\left[ \begin{array}{c}
   0  \\
    \frac{1}{2}  \\
   0  \\
\end{array} \right],
\]
with $r=1$.

 In Butcher form, these become 
\[ \widehat{\mA} =  \left[ \begin{array}{lll}
0 & 0 &  0\\
1 & 0 & 0 \\
1/2 & 1/2 &  0\\ 
  \end{array} \right], 
\; \; \; 
\mA= \left[ \begin{array}{lll}
1/2  & 0 &  0\\
 1/2  & 0 &  0 \\
  1/2 & 0 & 1/2 \\
  \end{array} \right],  \; \; \; 
\dot{\mA} =   \left[ \begin{array}{lll}
0 & 0 &  0\\
0  & -1/2 &  0 \\
0 & -1/4 & 0 \\
  \end{array} \right].
   \] 
 The benefit of this method over the one in \cite{HSZ18} is that the positivity preserving  coefficient $r=1$ for this method is larger than
 the  positivity preserving  coefficient 
 $r= 0.8125$ in the method given in Subsection 2.6.2 of \cite{HSZ18}. 
 The two methods each require the implicit solution of three stages.

\subsubsection{Third order method}
We found a third order method of this form, as well.
This method has  $r=0.904402174130635$
with coeffiicients:

\[ diag(\mD) = \left( \begin{array}{c}
0 \\
   2\\\
   0.388820513661584\\
   0.083529464436389\\
   1.793313488277995\\
0\\
  \end{array} \right), \; \; \; \; 
  diag(\dot{\mD}) = - \left( \begin{array}{c}
0.871358934880525\\
  0.856842702601821\\
  0\\
  0\\
  2\\
  0.205134529930013\\
  \end{array} \right).
  \]
  Note that $d_{ii} + \left| \dot{d}_{ii} \right| >0 $ for each stage $i$.
  
  \[ \mW = \left( \begin{array}{cccccc}
                   0                  &      0         &   \hspace{-0.05in}        0        &           0        &            0 & 0 \\
   0.058453072749259   &       0         &  \hspace{-0.05in}         0        &           0        &            0 & 0 \\
   0.764266518291495   &       0         &  \hspace{-0.05in}         0        &           0        &            0 & 0 \\
      0 			     & 	      0 & \hspace{-0.05in}    0.292520982667463           &        0         &          0 & 0 \\
   0.173788618990251   &      0 &\hspace{-0.05in}  0 &   0.281050180194829       &            0 & 0 \\
   0.016811671845949   &      0 & \hspace{-0.05in} 0 &    0.448630511341543   & 0 & 0 \\
     \end{array} \right), \]
     
\[ \mP = \left( \begin{array}{cccccc}     
   0            &       0         &          0        &           0        &            0 & 0 \\
   0.253395246357353             &       0         &          0        &           0        &            0 & 0 \\
   0   &0.235733481708505       &          0        &           0        &            0 & 0 \\
   0   &0.123961833526104   &          0        &           0        &            0 & 0 \\
   0.409037644509411   & 0.136123556305509  &          0        &           0        &            0 & 0 \\
   0.203353399602184   & 0 & 0 & 0 &   0.331204417210324 & 0 \\
     \end{array} \right), \]
   
   \[\mR\ve = \left( \begin{array}{l}  
     1 \\
   0.688151680893388 \\
                   0 \\
   0.583517183806433 \\
                   0 \\
                   0 \\
 \end{array} \right).
  \]

We have the Butcher form coefficient matrices:
{\scriptsize
\[ \mAh = \left( \begin{array}{cccccc}  
       0      &             0       & 0   &        0           &        0        &           0 \\
   0.064631725156397   & 0   &        0            &       0         &          0     &              0 \\
   0.860287477078593   & 0   &        0            &       0         &          0      &              0 \\
   0.259664005325885   & 0   & 0.323441264334256  &                 0         &       0    & 0 \\
   0.273935075266107   & 0   & 0.090903225623586 &  0.310757966128278        &   0          &         0 \\
   0.225810414773773   & 0   & 0.175213169672431  & 0.598976415553796        & 0            &       0 \\
   \end{array} \right),
  \] 

\[ \mA = \left( \begin{array}{cccccc}  
  0&                   0                 &  	0            &       		0        &           		0           &         0 \\
  0 &  2   &                0          &                        0             &      		0        &            0 \\
  0 &  0.471466963417009   & 0.388820513661584&                   0           &        		0         &          0\\
  0 &  0.385837646486197   & 0.113738158737554&   0.083529464436389           &       0           &        0 \\
  0 &  0.380686852681912   & 0.031966130008218&   0.023475971031425   & 1.793313488277995  & 0 \\
  0 &  0.299183707820065   & 0.061613731773316&   0.045249211646092   & 0.593953348760527  &  0 \\
   \end{array} \right),
  \]}
and
{\scriptsize
\[ \mAdot = -  \left( \begin{array}{cccccc}  
  0.871358934880525&                   0                 & \hspace{-.05in} 0   & \hspace{-.05in}0   &  0    & 0 \\
  0.271731819181020&  0.856842702601821  &\hspace{-.05in}0  &\hspace{-.05in}0    & 0     & 0 \\
  0.730006747169852&  0.201986513560852  &\hspace{-.05in}0  &\hspace{-.05in}0    & 0     & 0 \\
  0.247226665569066&  0.165301085890380  &\hspace{-.05in}0  &\hspace{-.05in}0    & 0     & 0 \\
  0.614323072678900&  0.163094375848475  &\hspace{-.05in}0  &\hspace{-.05in}0    & 2    & 0 \\
  0.506222742811925&  0.128176688391489  &\hspace{-.05in}0  &\hspace{-.05in}0    & 0.662408834420649 &  0.205134529930013 \\
    \end{array} \right).
  \] 
 }

To achieve a third order method we required six stages.
However, this allowed us to design a third order method that is SSP with a time-step restriction that does not
depend on $G$.

\subsection{Applications}
The new SSP multi-derivative IMEX methods developed in Subsection \ref{sec:new_methods} are of particular use for 
a number of models we describe in Subsection \ref{sec:models}. These are all problems that lead to  ODE systems of the form:
\begin{equation} \label{uu}
\frac{\rd{}u}{\rd{}t} = T(u)+ \frac{1}{\varepsilon} Q(u),
\end{equation}
where the solution $u(t)\in \mathbb{R}^N$, and the operators  $T$, $Q$: $\mathbb{R}^N\rightarrow \mathbb{R}^N$, $N\geq 2$. 
The parameter $0<\varepsilon \leq O(1)$ indicates the regime of the problem: $\varepsilon=O(1)$ corresponds to the non-stiff regime; 
$\varepsilon \ll 1$ to the stiff regime. 
For such systems, we require a high order  time discretization that  preserves the physical properties at the discrete level,
in particular  positivity and the asymptotic limit. 

\smallskip

\noindent{\bf Positivity:}
Problems of the form \eqref{uu} that are of interest to us have positive solutions. It is preferable that the numerical solution will preserve this positivity property, for a time-step not dependent on $\varepsilon$. It should be pointed out that positivity is an important property when solving kinetic equations. For example, the Bhatnagar-Gross-Krook (BGK) model (see equation (\ref{BGK}) below) requires the macroscopic quantities to be positive, and even small negative values of the solution $f$ may cause some macroscopic quantities, especially the temperature, to fail to be well-defined. In such cases, the requirement that the numerical solution remains positive for time-steps independent of $\varepsilon$ is critical to the success of the simulation. Strong stability preserving methods are also positivity preserving, so the multi-derivative IMEX methods given in 
Subsection \ref{sec:new_methods} will preserve these properties, with a time-step independent of $\varepsilon$.

\smallskip

\noindent{\bf Asymptotic limit:}
Very often the operator $Q$ satisfies the following properties: $Q$ is ``conservative" in the sense that there exists a linear operator $\cR$: $\mathbb{R}^N\rightarrow \mathbb{R}^n$, $n<N$, s.t. $\cR Q(u)=0$, $\forall \ u$; $Q$ is dissipative and has a unique local equilibrium of the form $E(\cR u)$, where $E$: $\mathbb{R}^n\rightarrow \mathbb{R}^N$ is some operator. Using these properties, applying $\cR$ to (\ref{uu}) yields
\begin{equation} \label{uu1}
\frac{\rd{}\omega}{\rd{}t} = \cR T(u), \quad \omega :=\cR u,
\end{equation}
which is not a closed system. However, if $\varepsilon \rightarrow 0$, (\ref{uu}) implies $Q(u)\rightarrow 0$, hence $u\rightarrow E(\omega)$. Substituting this $u$ into (\ref{uu1}) gives a closed (reduced) system:
\begin{equation} \label{ww}
\frac{\rd{}\omega}{\rd{}t} = \cR T(E(\omega)).
\end{equation}
The above simple analysis reveals that when $\varepsilon \rightarrow 0$, (\ref{uu}) is not only stiff but also possesses a {\em non-trivial asymptotic limit}. (Recall that $n<N$ and note that the original variable $u$ is in $\mathbb{R}^N$ while the reduced variable $\omega$ is in $ \mathbb{R}^n$.)

Systems of the form (\ref{uu}) arise (after the method of lines discretization of a PDE) from many physical problems in multi-scale modeling. A prominent example is the Boltzmann equation in kinetic theory \cite{Cercignani}:
\begin{equation}  \label{Boltz}
\partial_tf +v\cdot \nabla_x f= \frac{1}{\varepsilon} Q(f), \quad x, v\in \mathbb{R}^d, 
\end{equation}
where $f=f(t,x,v)\geq 0$ is the probability density function of time $t$, position $x$, and velocity $v$.
The term $v\cdot \nabla_xf$ describes the particle transport, and $Q(f)$ describes the collisions 
between particles, which is a complicated nonlinear integral operator. 
The dimensionless parameter $\varepsilon$, called the Knudsen number,
 is defined as the ratio of the mean free path and characteristic length scale. 
 When $\varepsilon = O(1)$, the transport and collision balance so the system is in the fully kinetic regime. 
 When $\varepsilon \ll 1$, the collision effect dominates, i.e., collisions happen so frequently that the overall 
 system is close to the local equilibrium or fluid regime. In this case, one can derive the limiting fluid equations (the compressible Euler equations) as $\varepsilon \rightarrow 0$ from (\ref{Boltz}). The process is similar to the abstract model reduction procedure described above for (\ref{uu}). 

We require a time-stepping method that preserves the asymptotic limit of the equation. 
That is, for a fixed $\Delta t$, when $\varepsilon\rightarrow 0$, the scheme for (\ref{uu}) automatically reduces to a high order time discretization for the limiting system (\ref{ww}). 
A numerical scheme with this property is called asymptotic preserving (AP) as initially coined in \cite{Jin99}. 
To insure the AP property, the time step $\Delta t$ should not be limited by the small parameter $\varepsilon$. 
This necessitates some implicit treatment of the stiff collision term $\frac{1}{\varepsilon} Q(u)$. 
The need for the AP property further motivates the  use of implicit-explicit (IMEX) methods. \rev{There is an extensive literature on development of IMEX schemes that possess the AP property, see, for instance, \cite{PR05, DP13, BPR13, DP17} for the application to hyperbolic and kinetic equations.}

The need for a high order numerical integrator that is both asymptotic preserving and positivity preserving motivated
the work in this paper. We will show that the second and third order methods we presented above are 
asymptotic preserving high order time discretization methods that preserve the positivity of the solution for arbitrary 
$\varepsilon$. Previously, designing a time-stepping scheme with both positivity and AP property has proven difficult. 
First order  IMEX schemes with these properties exist, but methods above first order may violate  
positivity unless the time-step is restricted by $\varepsilon$ \cite{Higueras06, HR06}. 

Second order IMEX schemes that preserve the AP property and positivity for arbitrary $ \varepsilon$ have been 
previously found,  by incorporating a derivative correction term  at the final stage of each time-step.
 Such an approach was successfully considered in  \cite{HS17, HSZ18} 
 (note that the method in \cite{HS17} only works for a special relaxation system and
  can preserve the positivity of one component of the solution vector, while \cite{HSZ18} 
  works for a general class of equations and the scope is similar to what we consider in this work);  
  however, this strategy failed to find methods of order three. By formulating IMEX multi-derivative  Runge--Kutta methods that allow the 
  use of  $\dot{Q}$ at every stage,  we are able to obtain a third order IMEX method that is AP and positivity preserving independent of $\varepsilon$. 
 Furthermore, the second order method  improves upon the  previously presented method in \cite{HSZ18}, 
 in the sense that we obtain a $23\%$  larger allowable time-step. 
%

We present a a summary of the model equations  and their properties in Subsection \ref{sec:models}.  
In Subsection \ref{APIMEX} we prove the positivity and asymptotic preserving properties of the multi-derivative
IMEX Runge--Kutta methods. Finally, in Subsection  \ref{sec:numerical} we demonstrate the numerical performance 
of these methods on sample problems. 

\subsubsection{A summary of the models and properties} \label{sec:models}

We assume that the operators $T$ and $Q$ in (\ref{uu}) satisfy the following properties:

\begin{description}[leftmargin=2em,style=nextline]

\vspace{0.1in}
 \item[Property 1] {\em The operator $T$ is conditionally positivity preserving under a forward Euler step: }
 \begin{equation}
u >0 \Longrightarrow u+\Delta tT(u) >0, \  \forall  \  0\leq \Delta t \leq \Delta t_{\text{FE}},
 \end{equation}
 {\em for some  time step $\Delta t_{\text{FE}}> 0$.}
 
 \vspace{0.1in}
  \item[Property 2]  {\em The operator $Q$ is unconditionally positivity preserving under a backward Euler step: }
  \begin{equation}
u > 0, \ v=u+\Delta tQ(v)  \Longrightarrow v> 0, \  \forall  \ \Delta t\geq 0.
  \end{equation}
    \end{description}
  We observe that the first two properties essentially concern the positivity preserving property of the operators $T$ and $Q$ in equation (\ref{uu}). 
  
  \rev{
  \begin{rmk}
  Property 2 plays a similar role to that of Condition 2 in Section \ref{sec:2D-IMEX}. 
 Condition 2 is a forward Euler condition \eqref{FEcond}, which we then use to show that the backward Euler method
 unconditionally preserves this strong stability property. Property 2 states that backward Euler preserves 
 positivity unconditionally.  This is necessary because positivity may be preserved under the forward Euler condition but
 be violated (for certain $\dt$) for the backward Euler method.
  \end{rmk}}
  
  \vspace{0.1in}
  \begin{description}[leftmargin=2em,style=nextline]
    \item[Property 3] {\em  Conservation of $Q$: there exists a linear operator $\cR: \mathbb{R}^N\rightarrow \mathbb{R}^n$, $n< N$, s.t.}
  \begin{equation}
\cR Q(u)=0, \ \forall \ u.
  \end{equation}
  
  \vspace{0.1in}
 \item[Property 4]  {\em Equilibrium of $Q$: there exists an (possibly nonlinear) operator $E: \mathbb{R}^n\rightarrow \mathbb{R}^N$, s.t.}
  \begin{equation}
Q(u)=0 \Longrightarrow u=E(\cR u).
 \end{equation}
{\em Moreover, $E$ satisfies $\cR E(\cR u)=\cR u$, $\forall \ u$.}
\end{description}
 Note that Properties 3 and 4 together imply that (\ref{uu}) has a limiting system (\ref{ww}). Properties 1--4 are satisfied by a large class of kinetic equations of the form (\ref{Boltz}), where the collision operator $Q$ can be the full Boltzmann collision operator (an integral type operator), the kinetic Fokker-Planck operator (a diffusion type operator), the BGK operator (a relaxation type operator), or its generalized version such as the ES-BGK operator. For more details about these operators, we refer the readers to \cite{HS19}.

   \vspace{0.1in}
    \begin{description}[leftmargin=2em,style=nextline]
  \item[Property 5] {\em  The Fr\'{e}chet derivative of $Q$ satisfies }
   \begin{equation}
  \dot{Q}(u):=Q'(u)Q(u)=-C_{\cR u}Q(u),
  \end{equation}
  {\em  where $C_{\cR u}$ is some positive function depending only on $\cR u$. }
  {\em  The Fr\'{e}chet derivative of $Q$ at $u$ is defined by}
  \begin{equation}
Q'(u)v = \lim_{\delta\rightarrow 0}\frac{Q(u+\delta v)-Q(u)}{\delta}.
 \end{equation}
\end{description}
Property 5 means the operator $Q$ is dissipative in some sense. This property  is not generic but it is satisfied by quite a few kinetic models including the BGK operator and the Broadwell model. Some stiff ODE systems and hyperbolic relaxation systems also satisfy this property, though for these problems positivity is usually not a big concern compared to the kinetic equations. Since our proposed multi-derivative methods highly depend on Property 5, we list below a few examples.

\vspace{0.1in}
\begin{itemize}
\item {\bf An ODE model:}
\begin{equation} \label{vdp}
\left\{
\begin{split}
u_1'&=u_2,\\
u_2'&=\frac{1}{\varepsilon}f(u_1)\left(g(u_1)-u_2 \right),
\end{split}
\right.
\end{equation}
where $f$ and $g$ are some functions of $u_1$, and $f(u_1)> 0$. Define 
\begin{equation}
u=(u_1,u_2)^T, \quad T(u)=(u_2,0)^T, \quad Q(u)=\left(0,f(u_1)\left(g(u_1)-u_2 \right)\right)^T,
\end{equation}
then (\ref{vdp}) falls into the general form (\ref{uu}). It is easy to see that (\ref{vdp}) has a limit as $\varepsilon\rightarrow 0$:
\begin{equation}
u_1'=g(u_1).
\end{equation}
Indeed, one can just take \rev{ $\cR u = u_1$} and $E(\cR u)=E(u_1):=\left(u_1,g(u_1)\right)^T$. It can also be verified by direct calculation that 
\begin{equation}
\dot{Q}(u)=-f(u_1)Q(u).
\end{equation}

\item 
{\bf A PDE model: the hyperbolic relaxation system \cite{CLL94}:}
\begin{equation} \label{hr}
\left\{
\begin{split}
&\partial_tu_1+\partial_xu_2=0,\\
&\partial_tu_2+\partial_x u_1=\frac{1}{\varepsilon}\left(F(u_1)-u_2\right),
\end{split}
\right.
\end{equation}
where $F$ is some function of $u_1$.  Equation (\ref{hr}) again has the form of (\ref{uu}) if we define $u=(u_1,u_2)^T$, 
$T(u)=-(\partial_xu_2,\partial_xu_1)^T$, $Q(u)=(0,F(u_1)-u_2)^T$. 
Note that we abused the notation a bit: $u$, $T$ and $Q$ should be defined for the system after spatial discretization. 
It is easy to see that (\ref{hr}) has a limit as $\varepsilon\rightarrow 0$:
\begin{equation}
\partial_t u_1+\partial_xF(u_1)=0.
\end{equation}
Indeed, one can just take \rev{ $\cR u = u_1$} and $E(\cR u)=E(u_1):=\left(u_1,F(u_1)\right)^T$. Similarly to the previous model, it can be verified that 
\begin{equation}
\dot{Q}(u)=-Q(u).
\end{equation}

\item {\bf The Broadwell model \cite{Broadwell64}:}
The Broadwell model is a simple discrete velocity kinetic model:
\begin{equation} \label{broadwell}
\left\{
\begin{split}
&\partial_t f_+ + \partial_x f_+  = \frac{1}{\varepsilon}(f_0^2-f_+f_-), \\
&\partial_t f_0 = - \frac{1}{\varepsilon}(f_0^2-f_+f_-), \\
&\partial_t f_- - \partial_x f_-  = \frac{1}{\varepsilon}(f_0^2-f_+f_-), \\
\end{split}
\right.
\end{equation}
where $f_+=f_+(t,x)$, $f_0=f_0(t,x)$, and $f_-=f_-(t,x)$ denote the densities of particles with speed 1, 0, and $-1$, respectively. Define $f = (f_+,f_0,f_-)^T$, $T(f) = (-\partial_x f_+,0,\partial_x f_-)^T$, and $Q(f) = (f_0^2-f_+f_-,-(f_0^2-f_+f_-),f_0^2-f_+f_-)^T$ (again these should be defined for the system after spatial discretization). Then (\ref{broadwell}) falls into the general form (\ref{uu}). To see its limit as $\varepsilon \rightarrow 0$, we rewrite (\ref{broadwell}) using moment variables:
\begin{equation} \label{broadmoments}
\left\{
\begin{split}
\partial_t \rho + \partial_x m & = 0,\\
\partial_t m + \partial_x z & = 0,\\
\partial_t z + \partial_x m & = \frac{1}{2\varepsilon}(\rho^2 + m^2 - 2\rho z),\\
\end{split}
\right.
\end{equation}
where $\rho:=f_++2f_0+f_-$, $m := f_+-f_-$, and $z:= f_++f_-$. From (\ref{broadmoments}), it is clear that when $\varepsilon\rightarrow 0$, $z\rightarrow \frac{\rho^2+m^2}{2\rho}$. This, when substituted into the first two equations, yields a closed hyperbolic system:
\begin{equation} \label{broadlimit}
\left\{
\begin{split}
&\partial_t \rho + \partial_x m  = 0,\\
&\partial_t m + \partial_x \left (\frac{\rho^2+m^2}{2\rho} \right)  = 0.\\
\end{split}
\right.
\end{equation}
Indeed, the operators $\cR$ and $E$ in Properties 3--4 can be taken as
\begin{eqnarray} 
&\rev{\cR f=(\rho,m)^T,}\\
&E(\cR f)=E((\rho,m)^T):=\left(\frac{(\rho+m)^2}{4\rho},\frac{\rho^2-m^2}{4\rho},\frac{(\rho-m)^2}{4\rho}\right)^T. \nonumber
\end{eqnarray}
Furthermore, it can be verified that
\begin{equation}
\dot{Q}(f) = -\rho Q(f).
\end{equation}
\item {\bf The Bhatnagar-Gross-Krook (BGK) model \cite{BGK54}:}
The BGK model is a widely used kinetic model introduced to mimic the full Boltzmann equation:
\begin{equation} \label{BGK}
\partial_tf +v\cdot \nabla_x f=\frac{1}{\varepsilon} (M-f), \quad x,v \in \mathbb{R}^d,
\end{equation}
where $f=f(t,x,v)$ is the probability density function and $M$ is the so-called Maxwellian given by
\begin{equation}
M(t,x,v)=\frac{\rho(t,x)}{(2\pi T(t,x))^{d/2}}\exp \left(-\frac{|v-u(t,x)|^2}{2T(t,x)}\right),
\end{equation}
where the density $\rho$, bulk velocity $u$ and temperature $T$ are given by the moments of $f$:
\begin{equation}
\rho=\int_{\mathbb{R}^d} f \rd{v}, \quad \rho u=\int_{\mathbb{R}^d}fv \rd{v}, \quad  \frac{1}{2}\rho dT=\frac{1}{2}\int_{\mathbb{R}^d}f|v-u|^2\rd{v}. 
\end{equation}
To see its asymptotic limit, we multiply (\ref{BGK}) by $(1,v,|v|^2/2)^T$ and integrate w.r.t.~$v$ to obtain
\begin{align}  \label{lcl}
\left\{
\begin{array}{l}
\displaystyle\partial_t \rho+\nabla_x\cdot \int_{\mathbb{R}^d} vf \rd{v}=0,\\[8pt]
\displaystyle \partial_t (\rho u) +\nabla_x\cdot  \int_{\mathbb{R}^d} v\otimes vf\rd{v}=0,\\[8pt]
\displaystyle \partial_t \mathcal{E} +\nabla_x\cdot \int_{\mathbb{R}^d} \frac{1}{2}v|v|^2f\rd{v}=0,
\end{array}\right.
\end{align}
where $\mathcal{E}=\frac{1}{2}\rho u^2 +\frac{1}{2}\rho dT$ is the total energy. This system is not closed. However, if $\varepsilon \rightarrow 0$, (\ref{BGK}) implies $f\rightarrow M$. Substituting this $f$ into (\ref{lcl}), we can get a closed system
\begin{align}  \label{euler}
\left\{
\begin{array}{l}
\partial_t \rho+\nabla_x\cdot (\rho u)=0,\\[8pt]
\partial_t (\rho u)+\nabla_x\cdot (\rho u\otimes u +pI)=0,\\[8pt]
\partial_t \mathcal{E}+\nabla_x\cdot ((\mathcal{E}+p)u)=0,
\end{array}\right.
\end{align}
where $I$ is the identity matrix and $p=\rho T$ is the pressure. 
Equation (\ref{euler}) is nothing but the compressible Euler equations. 
To write the BGK model into the form (\ref{uu}), we define $T(f)=-v\cdot \nabla_x f$ 
and $Q(f)=M-f$ (these should be defined for (\ref{BGK}) after spatial and velocity discretization). 
Moreover, the operators $\cR$ and $E$ are given by
\begin{eqnarray}
& \rev{ \cR f=\int_{\mathbb{R}^d} f(1,v,|v|^2/2)^T\,\rd{v}=(\rho,\rho u,\mathcal{E})^T,}\\
&E(\cR f)=E((\rho, \rho u, \mathcal{E})^T)=M.
\end{eqnarray}
Furthermore, it can be verified that 
\begin{equation}
\dot{Q}(f) = -Q(f).
\end{equation}
\end{itemize}

To summarize, we have introduced four different models (including both ODE and PDEs) which all satisfy Properties 3--5. For the Broadwell model and BGK model, one can check that they also satisfy the positivity-preserving Properties 1--2 provided a positivity preserving spatial discretization is used for the transport/convection term, see \cite{HSZ18} for more details.

\subsubsection{Properties of the numerical scheme} \label{APIMEX}
A \rev{multi-derivative IMEX} method that is SSP as shown in Subsection \ref{sec:SSPIMEX} 
will also be positivity preserving. 
This is because the SSP property holds for any 
convex functional, and positivity is preserved under a convex functional. 
In Proposition \ref{prop:AP_positive} we show this explicitly, 
and we also prove that under a mild additional condition satisfied by the methods in 
Subsection \ref{sec:new_methods}, the asymptotic preserving property is satisfied as well.

\begin{proposition} \label{prop:AP_positive}
Assume that the problem (\ref{uu}) satisfies the Properties 1--5  listed in Subsection~\ref{sec:models}. 
Then the scheme (\ref{IMEX-RK}) that satisfies the inequalities (element-wise)
\begin{eqnarray}
 \mR  \ve \geq  0,  \quad  \mP  \geq  0, \quad \mW  \geq  0, \quad \mD \geq  0, \quad \dot{\mD} \leq  0, 
 \end{eqnarray}
will preserve the positivity of the solution for all $\dt \leq r \DtFE$. 
\rev{Furthermore, if we require that at least one of $Q$ or $\dot{Q}$ appear at every stage,
 i.e. the strict inequality
\begin{eqnarray}
d_{ii} + \left|  \dot{d}_{ii} \right|  >  0, \quad \text{for all } \ i=1,\dots,s,
  \end{eqnarray}
is also satisfied,} then  the scheme is AP, \rev{i.e.} when $\Delta t$ is fixed, as $\varepsilon \rightarrow 0$, (\ref{IMEX-RK}) automatically 
reduces to an explicit Runge-Kutta scheme, with the same order as the original scheme, applied to the limiting system (\ref{ww}).
\end{proposition}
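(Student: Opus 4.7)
The plan is to prove positivity and the AP property separately, both by direct manipulation of the Shu--Osher stage equations. Identifying $F = T$ and $G = \tfrac{1}{\varepsilon}Q$ so that $\dot{G} = \tfrac{1}{\varepsilon^{2}}\dot{Q} = -\tfrac{C_{\cR u}}{\varepsilon^{2}} Q$ by Property 5, the $i$th stage of (\ref{IMEX-RK}) rewrites as
\begin{equation*}
u^{(i)} \;=\; u_e^{(i)} + \lambda_i\, Q\bigl(u^{(i)}\bigr), \qquad \lambda_i \;:=\; \frac{\dt\, d_{ii}}{\varepsilon} + \frac{\dt^{2}\,|\dot{d}_{ii}|\, C_{\cR u^{(i)}}}{\varepsilon^{2}},
\end{equation*}
where $u_e^{(i)} = r_i u^n + \sum_{j<i} p_{ij} u^{(j)} + \sum_{j<i} w_{ij}\bigl(u^{(j)} + \tfrac{\dt}{r} T(u^{(j)})\bigr)$ collects the explicit contributions. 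Applying $\cR$ to this stage equation and using Property 3 ($\cR Q\equiv 0$) gives $\cR u^{(i)} = \cR u_e^{(i)}$, so $C_{\cR u^{(i)}}$ is determined solely by $u_e^{(i)}$ and $\lambda_i\geq 0$ is a bona fide nonnegative constant. For positivity, I would then induct on $i$: assuming $u^n>0$ and $u^{(j)}>0$ for $j<i$, the nonnegativity of $r_i,p_{ij},w_{ij}$ together with Property 1 (applied to $u^{(j)} + \tfrac{\dt}{r} T(u^{(j)})$ under the time-step $\dt \leq r\DtFE$) realize $u_e^{(i)}$ as a convex combination of componentwise positive vectors, hence $u_e^{(i)}>0$; the stage equation is then literally a backward Euler step for $\dot u = Q(u)$ with step size $\lambda_i$, so Property 2 delivers $u^{(i)}>0$.

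For the AP claim, the strict inequality $d_{ii}+|\dot{d}_{ii}|>0$ forces $\lambda_i>0$ at every stage, with $\lambda_i$ of order $\varepsilon^{-1}$ or $\varepsilon^{-2}$ as $\varepsilon \to 0$. Rearranging to $Q(u^{(i)}) = (u^{(i)}-u_e^{(i)})/\lambda_i$ and noting the boundedness of the numerator, I conclude $Q(u^{(i)}) \to 0$; Property 4 then forces $u^{(i)} \to E(\cR u^{(i)}) = E(\omega^{(i)})$ with $\omega^{(i)}:=\cR u^{(i)}$. Next, I apply $\cR$ to (\ref{IMEX-RK}): Property 3 together with Property 5 give $\cR Q = \cR \dot Q \equiv 0$, so both implicit contributions at stage $i$ are annihilated, leaving
\begin{equation*}
\omega^{(i)} \;=\; r_i\, \omega^n + \sum_{j<i} p_{ij}\,\omega^{(j)} + \sum_{j<i} w_{ij}\!\left(\omega^{(j)} + \frac{\dt}{r}\,\cR T(u^{(j)})\right).
\end{equation*}
Taking $\varepsilon \to 0$ replaces $\cR T(u^{(j)})$ by $\cR T(E(\omega^{(j)}))$, exhibiting the Shu--Osher form of an explicit Runge--Kutta method applied to the limiting system $\omega_t = \cR T(E(\omega))$, i.e.\ (\ref{ww}).

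The hard part will be verifying that this reduced explicit scheme inherits the full order $p$ of the original IMEX method. This amounts to checking that the IMEX order conditions in Subsection \ref{sec:OC-IMEX} involving only $\hat{\mA}$, $\hat{\vb}$ and $\hat{\vc}$ coincide, up to order $p$, with the classical explicit Runge--Kutta order conditions: inspection of the tabulated conditions shows that $\hat{\vb}^T\ve = 1$, $\hat{\vb}^T\hat{\vc} = \tfrac{1}{2}$, $\hat{\vb}^T(\hat{\vc}\cdot\hat{\vc}) = \tfrac{1}{3}$, and $\hat{\vb}^T\hat{\mA}\hat{\vc} = \tfrac{1}{6}$ appear explicitly, so the reduced scheme is genuinely of order $p$ for the cases $p \leq 3$ of interest here.
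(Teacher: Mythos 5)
Your proof is correct and follows essentially the same route as the paper's: collapse $\dot{Q}$ into $Q$ via Property 5, induct over the stages using Properties 1--2 for positivity, and apply $\cR$ together with Property 4 to obtain the limiting explicit Runge--Kutta scheme. Your two additions --- noting that $\cR u^{(i)} = \cR u_e^{(i)}$ so that the effective backward-Euler step size $\lambda_i$ is determined by already-known data (resolving the apparent implicitness of $C_{\cR u^{(i)}}$), and explicitly checking that the purely explicit order conditions appear among the tabulated IMEX conditions --- are refinements the paper leaves implicit, and both strengthen the argument.
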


\begin{proof}
We consider each stage of  \eqref{IMEX-RK}, 
\begin{eqnarray} \label{scheme1}
u^{(i)} & = & r_i u^n + \sum_{j=1}^{i-1} p_{ij} u^{(j)}    + \sum_{j=1}^{i-1} w_{ij} \left( u^{(j)}   + \frac{\dt}{r} T(u^{(j)}  ) \right)   \\
&& + \; \;  \frac{\dt}{\varepsilon} d_{ii} Q(u^{(i)}) + \frac{\dt^2}{\varepsilon^2} \dot{d}_{ii} \dot{Q}(u^{(i)})  \nonumber \\
 & = & r_i u^n + \sum_{j=1}^{i-1} p_{ij} u^{(j)}    + \sum_{j=1}^{i-1} w_{ij} \left( u^{(j)}   + \frac{\dt}{r} T(u^{(j)}  ) \right)   \nonumber  \\
 && + \; \;  \left(\frac{\dt}{\varepsilon} d_{ii} - \frac{\dt^2}{\varepsilon^2} \dot{d}_{ii}C_{\cR u^{(i)}} \right)Q(u^{(i)}), \nonumber 
\end{eqnarray}
where we applied Property 5 to the last term $\dot{Q}(u^{(i)})$.
  
At the first stage, we have
  \[ u^{(1)} =  u^n + \left(\frac{\dt}{\varepsilon} d_{11} - \frac{\dt^2}{\varepsilon^2} \dot{d}_{11}C_{\cR u^{(1)}} \right)Q(u^{(1)}) .\]
  Given  a positive $u^n$, and since $d_{11}\geq 0$, $\dot{d}_{11}\leq 0$, and $C_{\cR u^{(1)}}> 0$, 
using Property 2 we obtain $u^{(1)}> 0$. 

Now, given a positive $u^n$ and positive stages $u^{(j)}$ for $j<i$, Property 1 gives us the positivity of the explicit terms 
\[\left( u^{(j)}   + \frac{\dt}{r} T(u^{(j)}  ) \right) > 0, \; \; \; \; \mbox{for all }  \;  \frac{\dt}{r} \leq  \DtFE.\]
Consequently, the non-negativity of $r_i$, $p_{ij}$, and $w_{ij}$, together with the fact that $r_i + \sum_{j=1}^{i-1} (p_{ij}+w_{ij}) =1$
ensures the positivity of the explicit terms in $u^{(i)}$
  \[  r_i u^n + \sum_{j=1}^{i-1} p_{ij} u^{(j)} + \sum_{j=1}^{i-1} w_{ij} \left( u^{(j)}   + \frac{\dt}{r} T(u^{(j)}  ) \right) > 0.\]
Finally, since $d_{ii}\geq 0$, $\dot{d}_{ii}\leq 0$, and $C_{\cR u^{(i)}}> 0$, 
Property 2 assures that $u^{(i)}> 0$. 

To see the AP property, we apply $\cR$ to (\ref{scheme1}) to obtain (define $\omega^n=\cR u^n$, $\omega^{(i)}=\cR u^{(i)}$) 
\begin{eqnarray} \label{scheme2}
\omega^{(i)}  & = & r_i \omega^n + \sum_{j=1}^{i-1} p_{ij} \omega^{(j)} + \sum_{j=1}^{i-1} w_{ij} \left( \omega^{(j)} + \frac{\dt}{r} \cR T(u^{(j)}) \right),
  \end{eqnarray}
where the collision terms are gone due to Property 3. On the other hand, when $\Delta t$ 
is fixed and $\varepsilon \rightarrow 0$, since $d_{ii}+|\dot{d}_{ii}|>0$ and $C_{\cR u^{(i)}}> 0$,
we have from (\ref{scheme1}) that $Q(u^{(i)})\rightarrow 0$, hence $u^{(i)}\rightarrow E(\omega^{(i)})$ 
by Property 4. Note that this holds for every $i=1,\dots,s$. Replacing $u^{(j)}$ by $E(\omega^{(j)})$ in (\ref{scheme2}) yields
 \begin{eqnarray*}
\omega^{(i)}  & = & r_i \omega^n + \sum_{j=1}^{i-1} p_{ij} \omega^{(j)} + \sum_{j=1}^{i-1} w_{ij} \left( \omega^{(j)} + \frac{\dt}{r} \cR T(E(\omega^{(j)})) \right), \quad i=1,\dots,s;
  \end{eqnarray*}
together with $\omega^{n+1}=\omega^{(s)}$, this is a high order explicit Runge-Kutta scheme applied to the limiting system (\ref{ww}). 
In fact, it is the explicit part of (\ref{IMEX-RK}) applied to (\ref{ww}).
\end{proof}

\rev{\begin{rmk}
Following the classification of various IMEX Runge-Kutta schemes in \cite{BPR13}, the multi-derivative IMEX schemes introduced in this paper are both type A and GSA. In other words, since $d_{ii} + \left|  \dot{d}_{ii} \right|  >  0$ for all $i$, we are solving an implicit collision step at every stage of the scheme, hence any initial condition is allowed to guarantee the AP property.
\end{rmk}}

\begin{rmk}
In the case of the Broadwell model and BGK equation, Theorem \ref{thm:SSP} can be used to prove the discrete entropy decay property of the numerical method. 
Taking the following 1D BGK equation as an example,
\begin{equation}
\partial_t f+v\partial_x f=\frac{1}{\varepsilon}(M-f).
\end{equation}
We set $G$ to be the BGK operator and $F$ be the transport operator discretized by the first order upwind method ($k$ is the spatial index):
\begin{equation}
(v\partial_xf)_k=\frac{v+|v|}{2}\frac{f_k-f_{k-1}}{\Delta x}+\frac{v-|v|}{2}\frac{f_{k+1}-f_k}{\Delta x},
\end{equation}
together with the periodic or compactly supported boundary condition. The convex functional $\|\cdot \|$ is taken as the discrete entropy 
\begin{equation}
S[f]=\Delta x\sum_k \int f_k\log f_k \rd{v}.
\end{equation}
Then it can be verified that $F$ and $G$ satisfy the Conditions 1--3 (for more details see \cite{HSZ18}). Therefore, the numerical solution obtained by method (\ref{IMEX-RKmatrix}) satisfies 
\begin{equation}
S[f^{n+1}]\leq S[f^n],
\end{equation}
under the conditions listed in Theorem~\ref{thm:SSP}.
\end{rmk}

\subsection{Numerical results} \label{sec:numerical}

In this subsection, we verify the accuracy of the proposed second and third order methods in Subsection \ref{sec:new_methods} on the ODE model, the Broadwell model, and the BGK equation. We will see that the methods exhibit the design accuracy in the kinetic regime $\varepsilon=O(1)$ as well as the fluid regime $\varepsilon \ll 1$. This latter behavior is exactly due to the AP property of the methods. For completeness, we also report the results of the methods in the intermediate regime (i.e., $\varepsilon$ lies between 0 and 1), where the methods may exhibit some order reduction as expected. A careful study of this behavior is beyond the scope of the current work and left for future work.

\begin{rmk}
Note that the order conditions in Subsection \ref{sec:OC-IMEX}
do not guarantee that we will not observe order reduction.
When $\varepsilon = O(1)$ we expect to see the design accuracy predicted by the order conditions.
When $\varepsilon \ll 1$ design accuracy may not be evident due to the order reduction phenomenon.
However, the AP property allows us to recover full accuracy in the asymptotic limit $\varepsilon \rightarrow 0$.
\end{rmk}

\subsubsection{An ODE model}

We consider the ODE model \eqref{vdp} with
\begin{equation}
f(u_1) = 1+u_1^2,\quad g(u_1) = \sin u_1.
\end{equation}
We take the initial data as $u(0) = (2,0)^T$ (which is inconsistent initial data, i.e., we do not start from equilbrium), 
and solve \eqref{vdp} by the second and third order methods in Subsection \ref{sec:new_methods}, 
up to final time $T=1$, with various $\varepsilon$ and $\Delta t$. 
To calculate the error of a numerical solution $U=[U_1,U_2]^T$, we compare with a reference solution $U^{\text{ref}}$ obtained by the MATLAB solver \texttt{ode15s} with relative tolerance $\texttt{RelTol}=1e-13$ and absolute tolerance $\texttt{AbsTol}=1e-15$, and compute the error by
\begin{equation}
\text{error} = |U_1(T)-U^{\text{ref}}_1(T)| + |U_2(T)-U^{\text{ref}}_2(T)|.
\end{equation}
The results are shown in Figures \ref{ODE2nd} and \ref{ODE3rd}. 
For both methods, one can see the design order 
accuracy in the kinetic regime ($\varepsilon=O(1)$ and $\Delta t$ is relatively small) 
and the fluid regime ($\varepsilon\ll 1$ and $\Delta t$ is not very small), 
while in the intermediate regime (when $\varepsilon$ and $\Delta t$ are comparable) one can see some order reduction. In Figure \ref{ODE3rd} with $\varepsilon=1$ (and similar for $\varepsilon=0.01,1e-10$),  one can see that the error increases as $\Delta t$ decreases when $\Delta t$ is less than $5e-5$, 
and this is a consequence of the accumulation of round-off errors.

We note that the intermediate plateaus that are seen in Figures \ref{ODE2nd} and \ref{ODE3rd}  are not an indication  
of the order reduction phenomena that is usually observed in the AP literature, as we observe the errors are not converging 
at a rate of $O(\dt)$, but leveling off at the order of $\varepsilon$.  
This result is not caused by numerical round off  errors, as the schemes are still converging to a ``solution" 
at the designed order of accuracy.  Indeed, if we compare the solution at time-step $\dt$ to the $\dt/2$ solution, 
we observe design-order of convergence.
The explanation for  these $O(\varepsilon)$ plateaus  can likely be found by looking at the  
higher order asymptotic expansion. In practice, these errors are of  $O(\varepsilon)$
which are typically much smaller than other sources of errors in simulations thus not typically exhibited in practice.

\begin{figure}[hbt]
    \centering
    \begin{minipage}{0.475\textwidth}
        \centering
        \includegraphics[width=\textwidth]{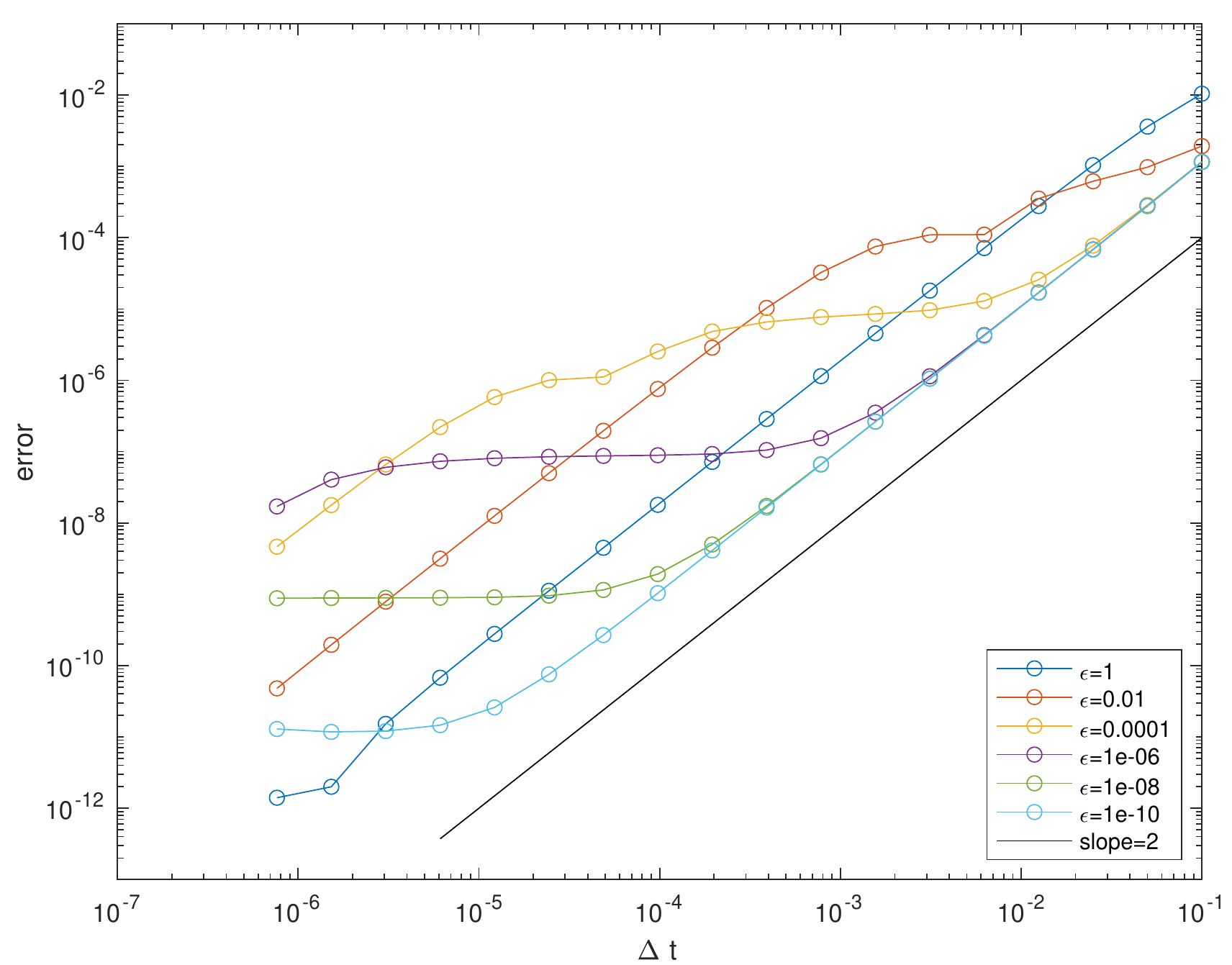} 
        \vspace{-.3in}
        \caption{Accuracy test of the new second order IMEX scheme for an ODE model. \label{ODE2nd}}
    \end{minipage}\hfill
    \begin{minipage}{0.475\textwidth}
        \centering
        \includegraphics[width=\textwidth]{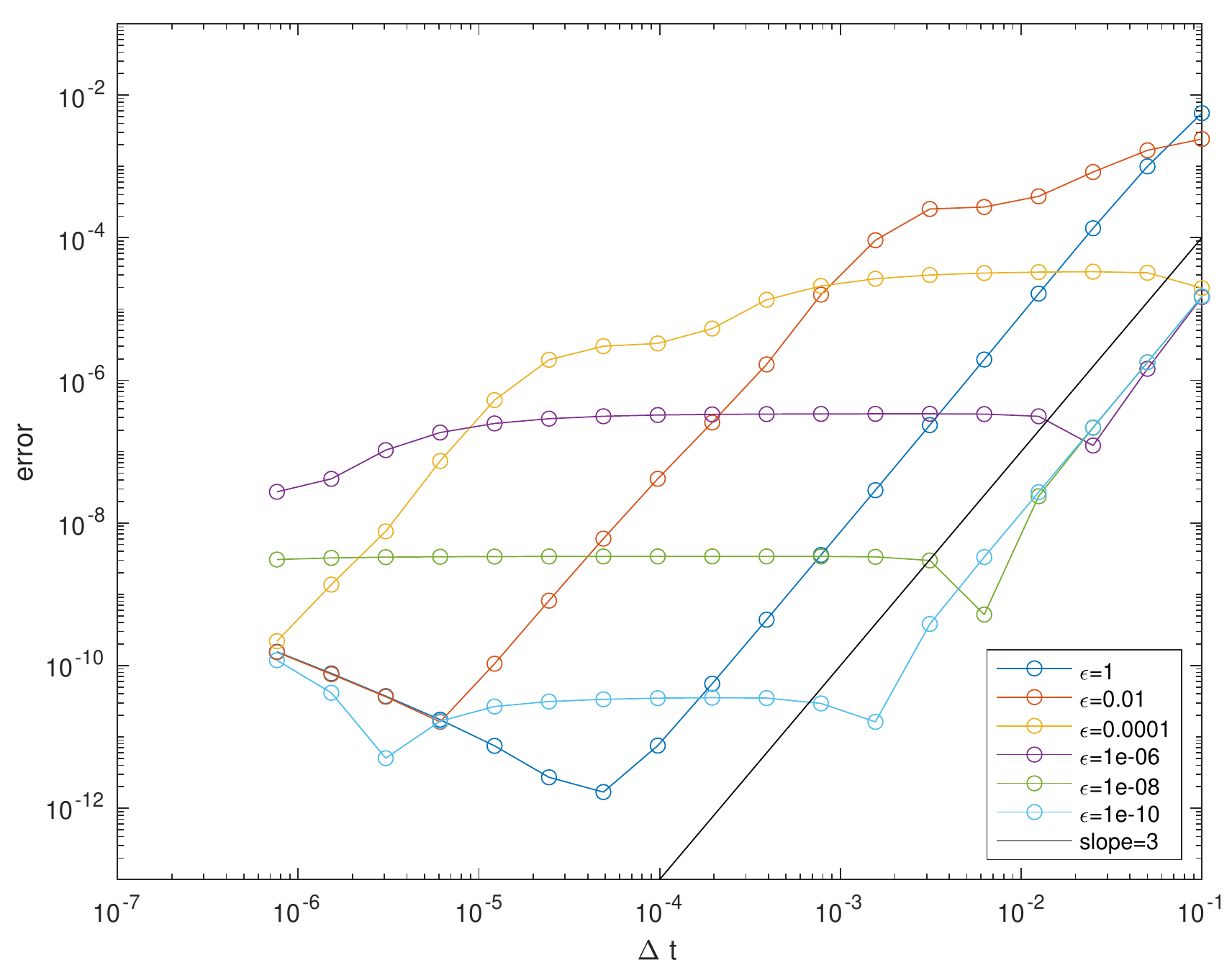} 
            \vspace{-.3in}
        \caption{Accuracy test of the new third order IMEX scheme for an ODE model.\label{ODE3rd}}
    \end{minipage}
\end{figure}

\subsubsection{The Broadwell model}

We consider the Broadwell model \eqref{broadwell} on the domain $x\in[0,2]$ with periodic boundary condition, with inconsistent initial data
\begin{eqnarray*}
f_+(0,\cdot) & =& 1+0.2 \exp(0.3\sin(\pi x)),  \quad  f_-(0,\cdot) = \exp(0.2\cos(2\pi x)),\\
 f_0(0,\cdot)&=&\frac{1}{1+0.3\sin(\pi x)}.
\end{eqnarray*}
We discretize in space by the fifth order finite volume WENO scheme, and the collision operator $Q$ is evaluated pointwise 
on the Gauss quadrature points in each cell, as described in  Subsection 3.3.2 of \cite{HSZ18}. 
We fix the the CFL number as $\Delta t = \frac{1}{2}\Delta x$, and solve \eqref{broadwell} by the second 
and third order methods in Subsection \ref{sec:new_methods} up to final time $T=0.1$. 
The error is computed by the $L^2$ norm of the difference between the numerical solution and one with a refined mesh. \rev{Note that in order for the fully discrete numerical scheme to be positivity-preserving, one has to use the positivity-preserving spatial discretization, for example, the positivity-preserving finite volume WENO scheme \cite{ZS10}, which requires a smaller CFL condition and a positivity-preserving limiter. Here since our main focus is to verify the order in time discretization and the AP property, we choose a larger time step and neglect the limiter.}

The results are shown in Figures \ref{Broadwell2nd} and \ref{Broadwell3rd}, and one can see similar behavior as in the previous subsection.

\begin{figure}[hbt]
    \centering
    \begin{minipage}{0.475\textwidth}
        \centering
        \includegraphics[width=\textwidth]{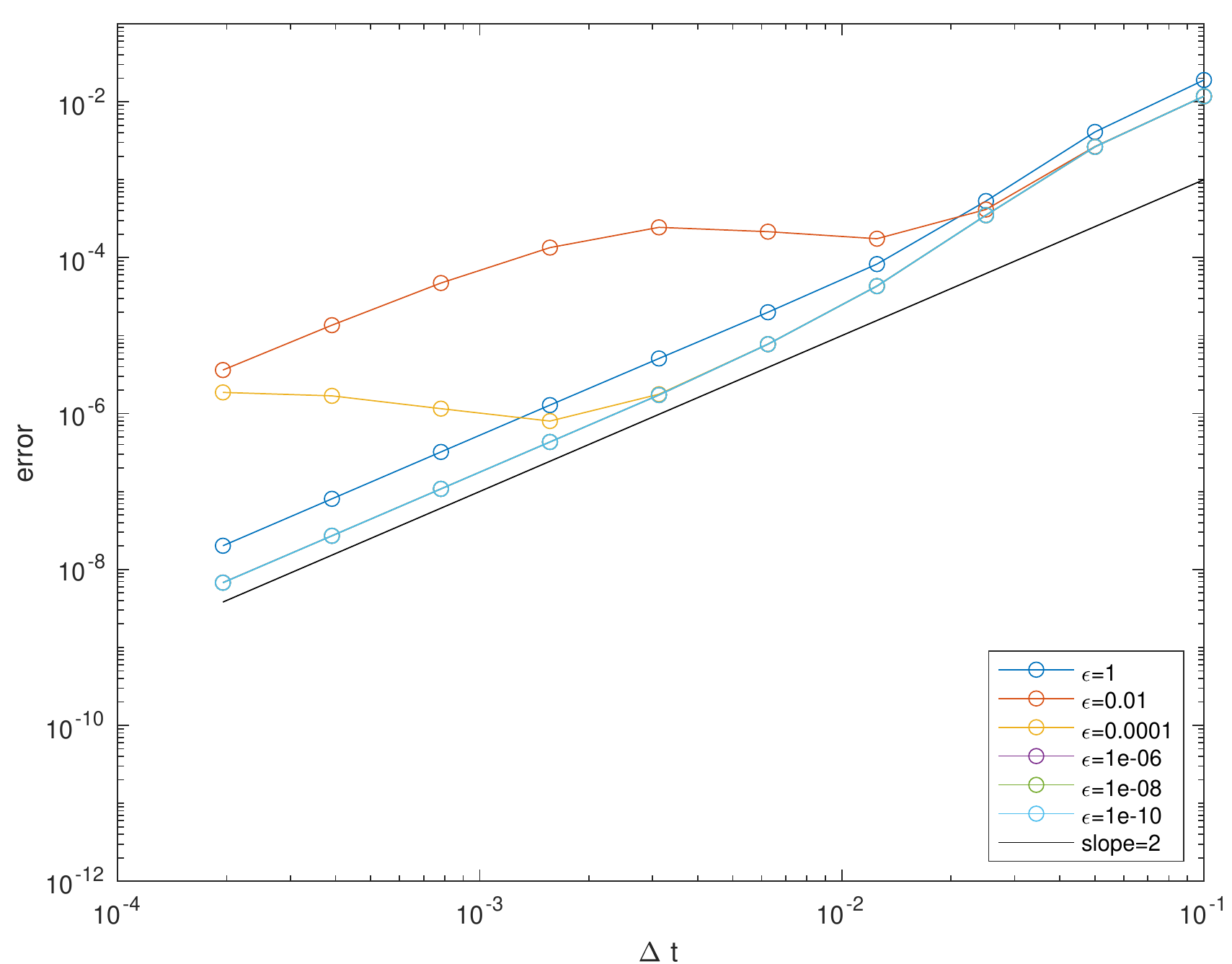} 
            \vspace{-.3in}
        \caption{Accuracy test of the new second order IMEX scheme for the Broadwell model. \label{Broadwell2nd}}
    \end{minipage}\hfill
    \begin{minipage}{0.475\textwidth}
        \centering
        \includegraphics[width=\textwidth]{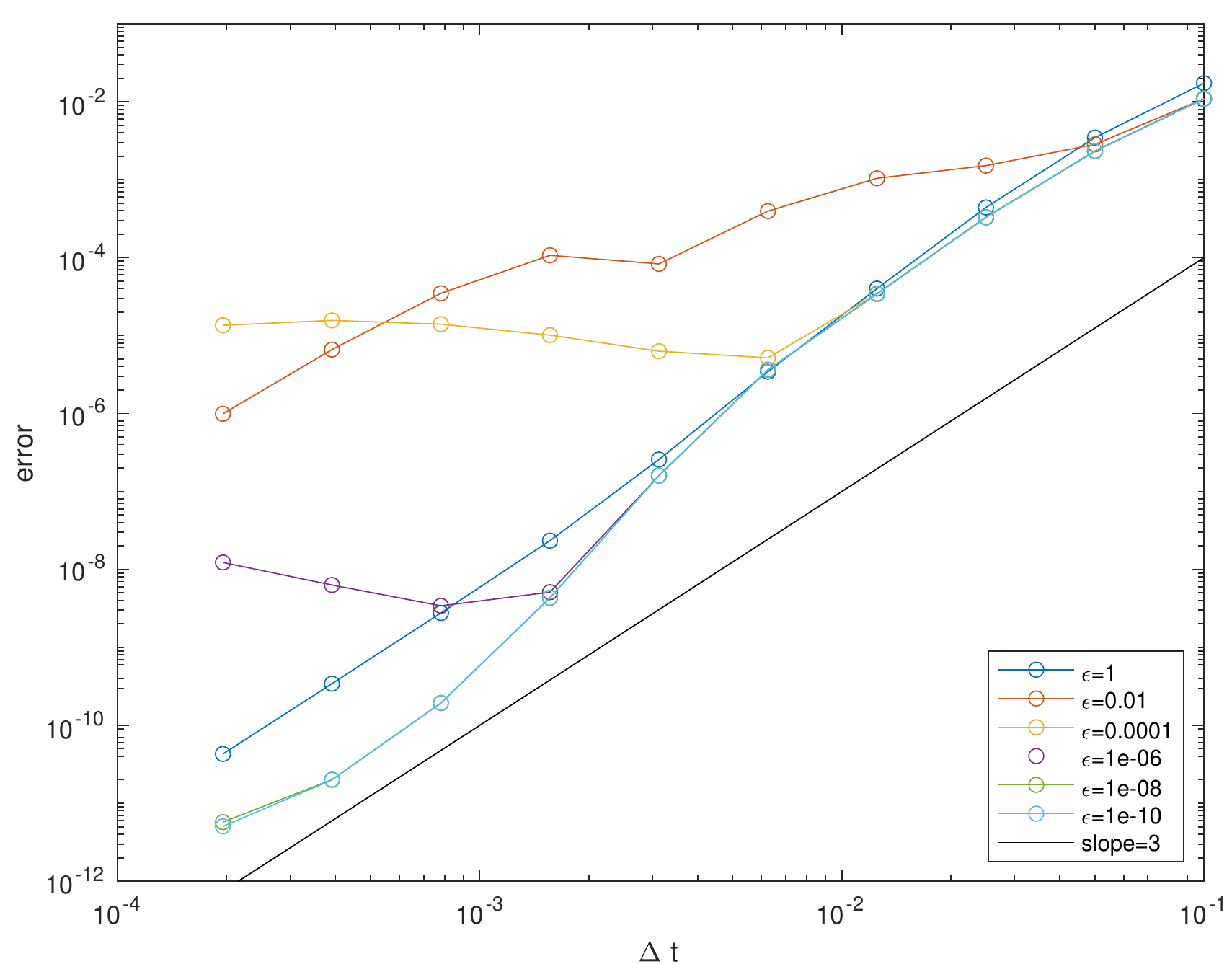} 
            \vspace{-.3in}
        \caption{Accuracy test of the new third order IMEX scheme for the Broadwell model.\label{Broadwell3rd}}
    \end{minipage}
\end{figure}

\subsubsection{The BGK model}

We consider the 1D BGK model \eqref{BGK} on the physical domain $x\in[0,2]$ with periodic boundary condition, 
and inconsistent initial data given by
\begin{equation}
f(0,x,v)=0.7M[\tilde{\rho}(x),\tilde{u}(x),\tilde{T}(x)](v) + 0.3M[\tilde{\rho}(x),-0.5\tilde{u}(x),\tilde{T}(x)](v),
\end{equation}
with
\begin{equation}
\tilde{\rho}(x) = 1+0.2\sin(2\pi x),\quad \tilde{u}(x)=1,\quad \tilde{T}(x) = \frac{1}{1+0.2\sin(\pi x)}.
\end{equation}
The velocity domain is truncated into $[-v_{max},v_{max}]$ with $v_{max}=15$ and discretized with $N_v=150$ grid points,
\begin{figure}[hbt]
    \centering
    \begin{minipage}{0.475\textwidth}
        \centering
        \includegraphics[width=\textwidth]{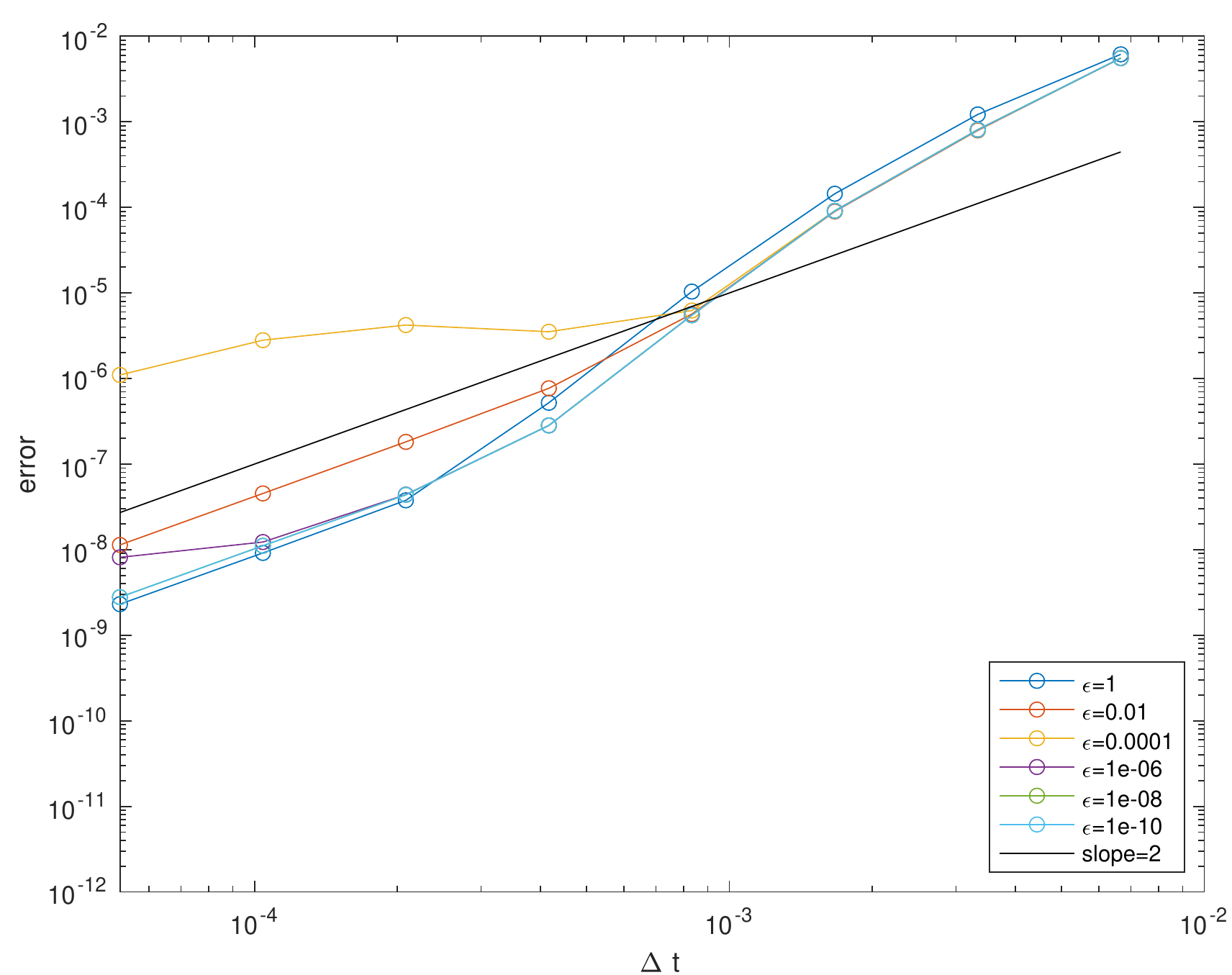} 
            \vspace{-.3in}
        \caption{Accuracy test of the new second order IMEX scheme for the BGK model. \label{BGK2nd}}
    \end{minipage}\hfill
    \begin{minipage}{0.475\textwidth}
        \centering
        \includegraphics[width=\textwidth]{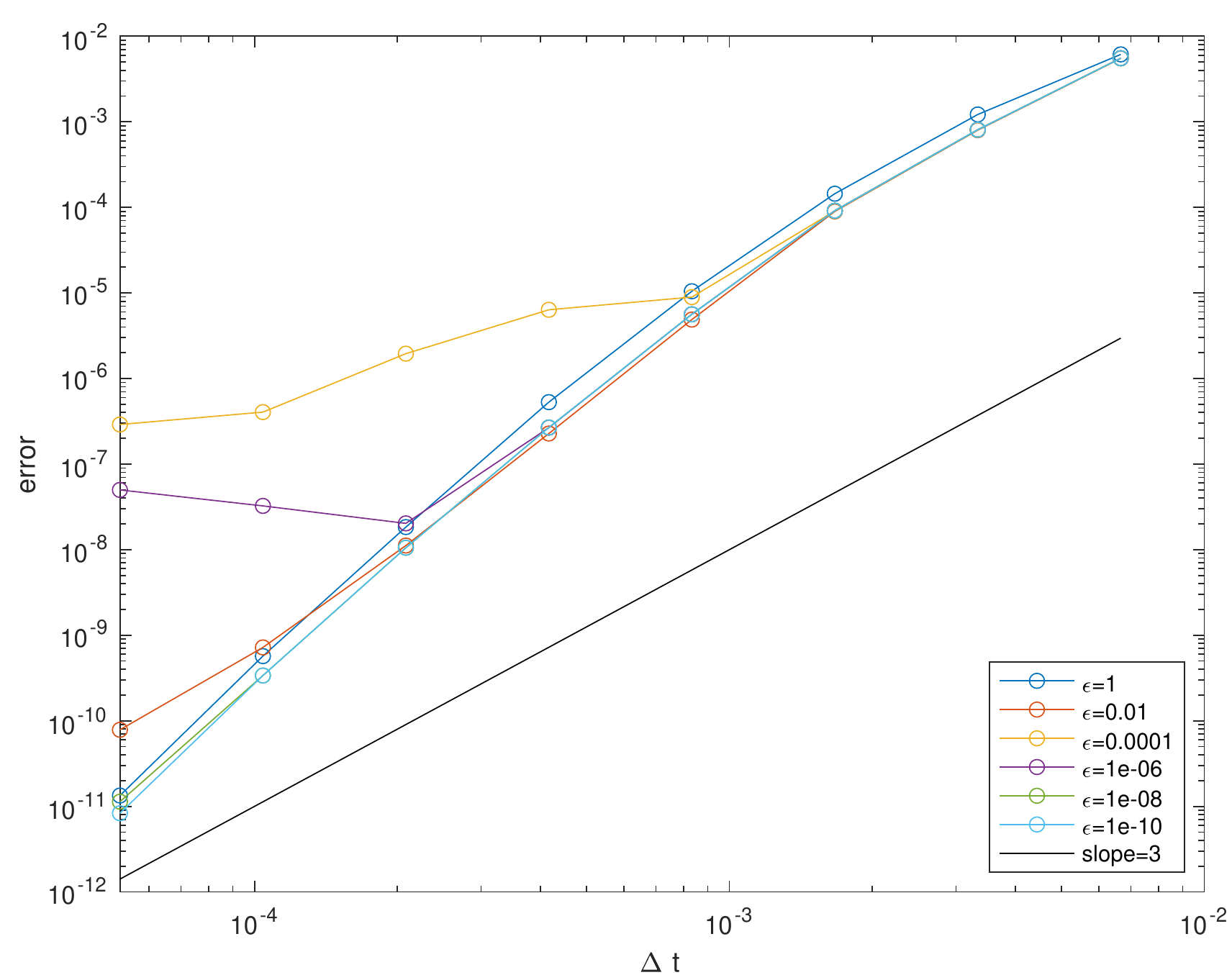} 
            \vspace{-.3in}
        \caption{Accuracy test of the new third order IMEX scheme for the BGK model. \label{BGK3rd}}
    \end{minipage}
\end{figure}
and the physical space is discretized in the same way as the previous subsection. We fix the the CFL number as $\Delta t = \frac{1}{2}\frac{\Delta x}{v_{max}}$, and solve \eqref{BGK} by the second and third order methods in Subsection \ref{sec:new_methods} up to final time $T=0.1$. The error is computed by the $L^2$ norm (in the $(x,v)$ space) of the difference between the numerical solution and one with a refined mesh. \rev{Note that the velocity space discretization may introduce some additional error such that the Properties 3 and 4 in Subsection \ref{sec:models} may not hold exactly. Here we chose a large velocity domain truncation and many grid points to make sure that the error from the velocity space discretization is negligible.}

The results are shown in Figures \ref{BGK2nd} and \ref{BGK3rd}. For the second order scheme, one can see clearly the second order accuracy when $\Delta t$ is small enough (so that the temporal error dominates) for both $\varepsilon=O(1)$ and $\varepsilon \ll1$, 
and order reduction is observed in the intermediate regime. For the third order scheme, 
when $\varepsilon=O(1)$ or $\varepsilon \ll 1$, the error 
converges at a higher than expected rate even for the smallest $\Delta t$ in the simulation, 
which suggests that the spatial error is still dominating. By comparing with the results of the second order scheme 
we see that the third order scheme indeed gives a much smaller error under the same time-step size.

\rev{Finally, to check the AP as well as the positivity-preserving properties, we use the second and third order multi-derivative IMEX methods in Subsection \ref{sec:new_methods} to solve a mixed regime problem, i.e., \eqref{BGK} with a variable Knudsen number $\varepsilon=\varepsilon(x)$ specified as below. This numerical example is comparable to the numerical result in Section 5.3 of \cite{HSZ18}.
}

\rev{
We take the physical domain as $x\in [0,2]$ with periodic boundary condition, and the variable Knudsen number 
\begin{equation}
\varepsilon(x) = \varepsilon_0 + (\tanh(1-11(x-1)) + \tanh(1+11(x-1))),\quad \varepsilon_0=10^{-5},
\end{equation}
so that the problem is in the kinetic regime ($\varepsilon(x)=O(1)$) near $x=1$, and in the fluid regime ($\varepsilon(x)\approx 10^{-5}$) for $x$ away from 1. The initial data is taken the same as eqs. (5.1)-(5.2) in \cite{HSZ18}. The final time is taken as $T=0.5$. For the new multi-derivative IMEX methods, we discretize the physical space by the fifth order finite volume WENO scheme with positivity-preserving limiters in \cite{ZS10}, and the velocity space is discretized in the same way as before. The variable Knudsen number is treated by a Gauss-Legendre quadrature in each spatial cell in the same way as Section 3.3.3 of \cite{HSZ18}. We take $N_x=40$ and $\Delta t=\frac{1}{24}\frac{\Delta x}{v_{max}}$ to satisfy the positivity-preserving CFL condition. 
}

\rev{In the simulation we tracked the numerical values (cell averages in the physical space) of $f$, and no negative cell is observed. The numerical solutions are compared with a reference solution obtained by the explicit second-order SSP-RK scheme with $N_x=80$ and $\Delta t = \frac{1}{240}\frac{\Delta x}{v_{max}} \approx 7\times 10^{-6}$, for which the smallest value of the Knudsen number (around $10^{-5}$) is resolved. The result is shown in Figure \ref{fig_mix}, in terms of the macroscopic quantities. One can see good agreement between the solution by the new schemes and the reference solution. This verifies the AP and positivity-preserving properties of the new multi-derivative IMEX methods.
}

\begin{figure}[hbt]
    \centering
    \begin{minipage}{0.49\textwidth}
        \centering
        \includegraphics[width=\textwidth]{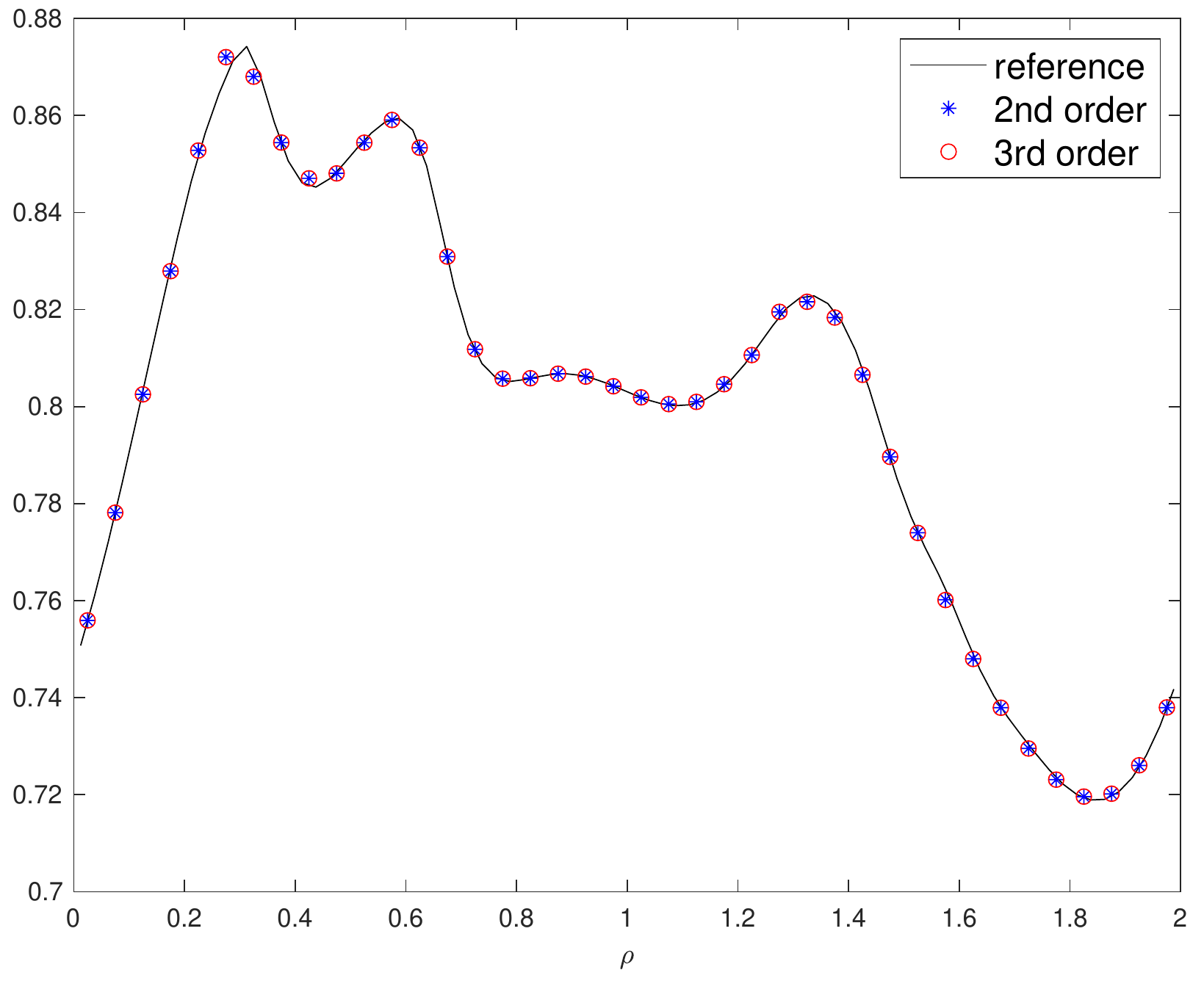} 
    \end{minipage}\hfill
    \begin{minipage}{0.49\textwidth}
        \centering
        \includegraphics[width=\textwidth]{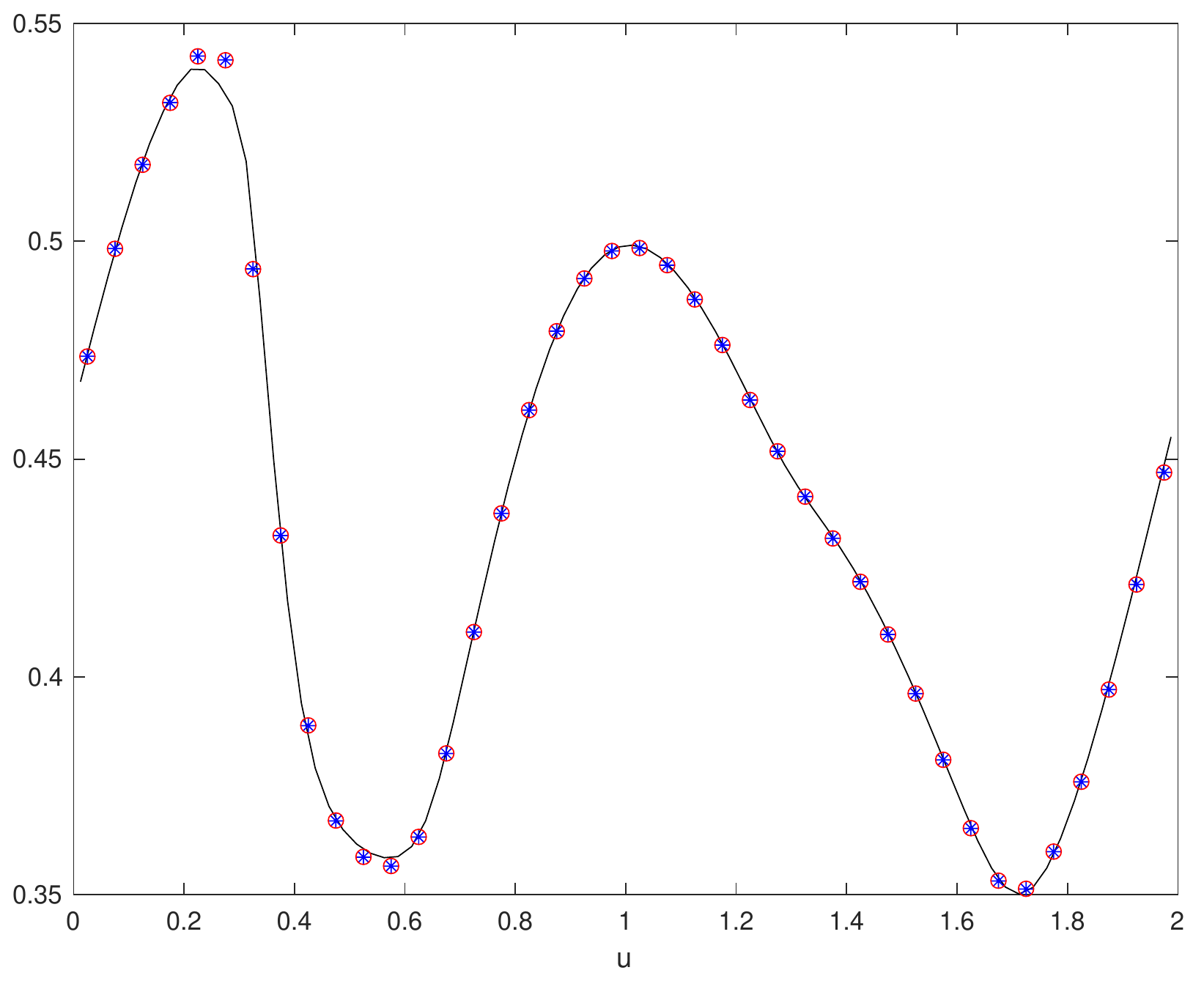} 
    \end{minipage}
    \begin{minipage}{0.49\textwidth}
        \centering
        \includegraphics[width=\textwidth]{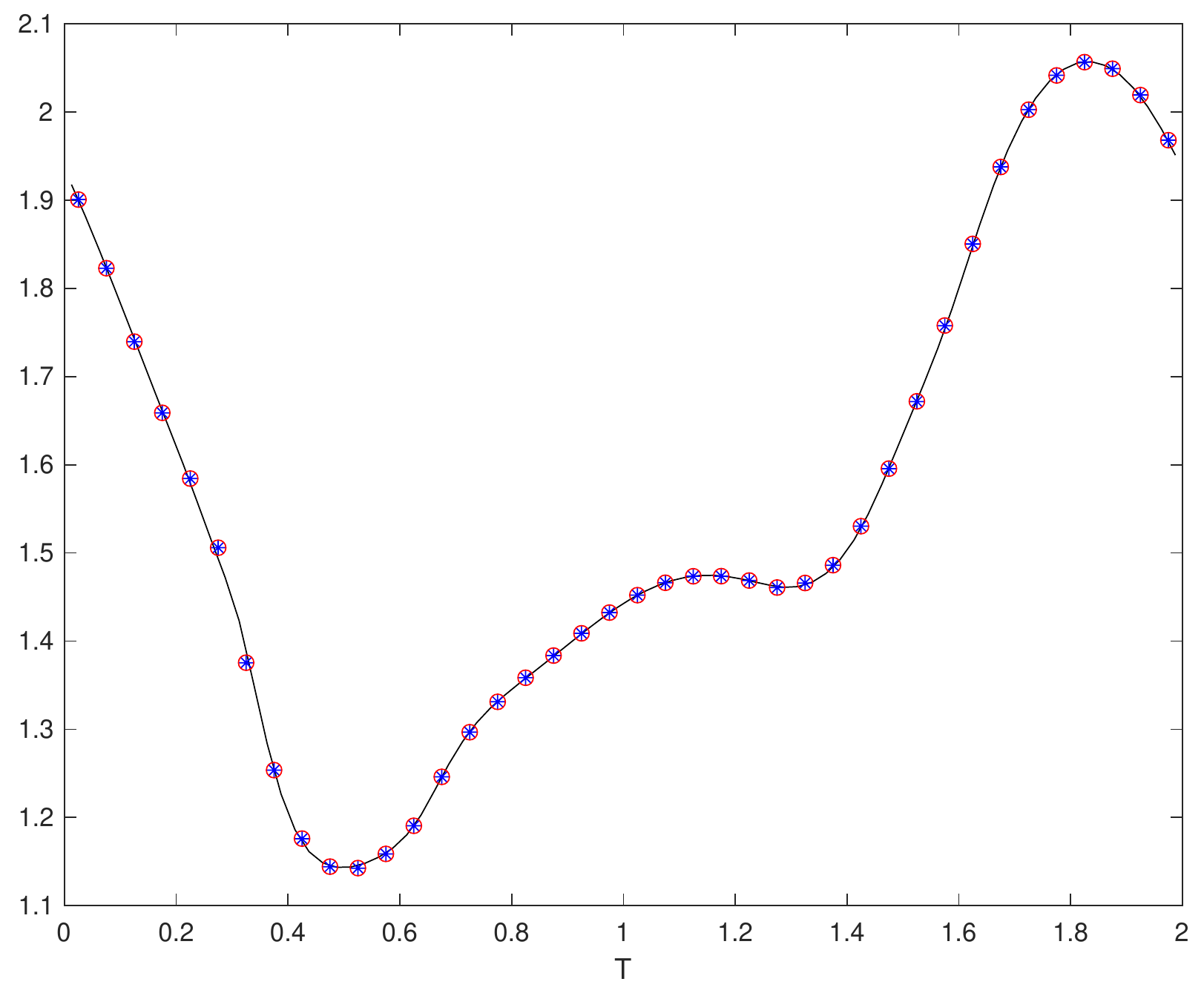} 
            \vspace{-.3in}
    \end{minipage}
    \caption{\rev{The mixed regime problem for the BGK model. Top left: density $\rho$; top right: bulk velocity $u$; bottom: temperature $T$. Asterisks/circles: numerical solutions by the new second/third order schemes. Solid line: the reference solution. The numerical solutions of the two new schemes are very close to each other because the spatial error is dominating.}}
    \label{fig_mix}
\end{figure}


%
%
\section{Conclusions}  \label{sec:conclusions}
In this work, we presented a class of unconditionally SSP implicit multi-derivative  Runge--Kutta schemes.
The unconditional SSP methods of order $p>2$ are novel, and is enabled by the backward derivative condition. 
This condition is an alternative to the second derivative conditions given in
\cite{SD,TS}, and  is highly relevant to a range of problems, as shown in Section \ref{sec:models}.  

The new backward derivative condition, which enabled the unconditionally SSP schemes, were inspired by the
 work in \cite{HSZ18} which derived positivity preserving and asymptotic preserving  
IMEX Runge--Kutta  methods with a derivative correction term.  We formulate 
multi-derivative  implicit-explicit (IMEX)  Runge--Kutta  methods that 
allow us to obtain order $p>2$   and to ensure that the method is  positivity preserving and asymptotic preserving when applied to 
problems that satisfy the five properties in Subsection \ref{sec:models}. 
In particular, we focus on an application area that includes
a hyperbolic relaxation model, the Broadwell model, and the BGK kinetic equation.
Such methods require treatment with an implicit-explicit (IMEX) time-stepping approach,
and it is desired that the method be AP and positivity preserving.

We derived and presented order conditions for SSP  IMEX multi-derivative Runge--Kutta methods,
and devised implicit methods that achieve fourth  order, and  IMEX methods that are third order,
and  are SSP under a time-step restriction independent of the stiff term.
The SSP condition ensures that the multi-derivative IMEX schemes are positivity preserving, and 
we present sufficient conditions  under which such methods are also 
asymptotic preserving  when applied to the problems of interest.
While we focused in the numerical examples on the IMEX schemes applied to  a hyperbolic relaxation system, 
the Broadwell model, and the BGK  equation, we stress that the results in this paper  are of broad use.
Any problems with operators that satisfy the forward Euler and -- if handled implicitly --
 the backward derivative condition can benefit from these
methods which are SSP with a time-step that does not depend on the function handled implicitly.

\newpage 
\appendix
\section{Unconditionally SSP implicit methods} \label{sec:appA}
\rev{
Previously, explicit SSP two-derivative methods were developed that preserved the 
 forward Euler \eqref{FEcond} and second derivative \eqref{SDcond} conditions \cite{SD} or the 
 forward Euler  \eqref{FEcond} and Taylor series  \eqref{TScond} conditions \cite{TS}.
 Methods that preserve the strong stability properties of these conditions require 
 nonnegative coefficients on the prior stages, the function,  and its derivative \cite{SD,TS}.
 In other words, we require that (elementwise)
 \begin{eqnarray} \label{forward-coef}
\mR \ve  \geq  0, \; \; \; \;  \mP  \geq  0, \; \; \;  \mD \geq  0, \; \; \; \; \dot{\mD} \geq  0.
 \end{eqnarray}
We show here that a method of the form \eqref{MDRK} that satisfies the conditions \eqref{forward-coef}
cannot be second order. This is simply a restatement of the proof in \cite{GST01} in the current notation.
}

\rev{
The first and second order conditions are
\[ \vb^T \ve = 1, \; \; \; \; \;  \vb^T \vc+ \dot{\vb}^T \ve=\frac{1}{2}.\]
Recall that $ \vb^T $ is the final row of $\mA $, and that $\vc$ is the row sum of $\mA$.
Note that the matrix
 $\mA = \mR^{-1} \mD = \left( I - \mP \right)^{-1} \mD$  can be written as
\[\mA= (I + \mP + \mP^2 +  . . . + \mP^{s-1}) \mD ,\]
a consequence of the fact that $\mP$ is strictly lower triangular and so $\mP^s$ becomes zero.
Let's look at each row of $\mA \ve$  and $\mA \vc$ using the recursive nature of the matrix multiplication: 
The first row is simply $ (\mA \ve)_1 = \mD_{11} $ and   $(\mA \vc)_1 = \mD_{11}^2$, the other rows are:
\[ (\mA \ve)_i = \mD_{ii} + \sum_{j=1}^{i-1} p_{ij}   \left(\mA \ve \right)_j \; \; \; \; \; 
(\mA \vc)_i = \mD_{ii} (\mA \ve)_i  + \sum_{j=1}^{i-1} p_{ij}   \left(\mA \vc \right)_j .\]
}

\rev{
For any real number $a$ the first row  satisfies
\[  (1-a) (\mA \ve)_1 - (\mA \vc)_1  =  (1-a) \mD_{11} - \mD_{11}^2 \leq k_1(1-a)^2\]
where $k_1 =\frac{1}{4}$.
We define 
\[k_i = \frac{1}{4 \left(1-k_{i-1}\right)},\]
and observe that $\frac{1}{4} = k_1 < k_2 < ... k_s < \frac{1}{2}$.
Now we can show recursively that if 
\[  (1-a) (\mA \ve)_j - (\mA \vc)_j \leq k_j (1-a)^2, \; \; \; \;  \forall j < i \] 
then  
\begin{align*}
(1-a)  (\mA \ve)_i - (\mA \vc)_i   & = (1-a)   \left( \mD_{ii} + \sum_{j=1}^{i-1} p_{ij}   \left(\mA \ve \right)_j \right) -
\left(  \mD_{ii} (\mA \ve)_i  + \sum_{j=1}^{i-1} p_{ij}   \left(\mA \vc \right)_j \right)  \\
& =  (1-a)    \mD_{ii}  -  \mD_{ii} (\mA \ve)_i  
+ \sum_{j=1}^{i-1} p_{ij}   \left( (1-a)  \left( \mA \ve \right)_j - \left(\mA \vc \right)_j  \right)    \\
& =   (1-a)   \mD_{ii}  -  \mD_{ii}^2   - \mD_{ii} \sum_{j=1}^{i-1} p_{ij}   \left(\mA \ve \right)_j 
+ \sum_{j=1}^{i-1} p_{ij}   \left( (1-a)  \left( \mA \ve \right)_j - \left(\mA \vc \right)_j  \right)    \\
& = (1-a)  \mD_{ii}  -  \mD_{ii}^2 
+ \sum_{j=1}^{i-1} p_{ij}  \left(  \left( 1-a  - \mD_{ii}  \right)  \left( \mA \ve \right)_j   - \left(\mA \vc \right)_j  \right)    \\
& < \left( 1-a -  \mD_{ii} \right)    \mD_{ii}  + k_{i-1} \left( 1-a  - \mD_{ii}  \right)^2.
\end{align*}
We look at this final  term and observe that it obtains a minimum at 
\[ \mD_{ii} = \frac{1}{2} \frac{(1-a)(2 k_{i-1} - 1)}{k_{i-1} -1}, \]
so that
\[
(1-a)  (\mA \ve)_i - (\mA \vc)_i    \leq \frac{1}{4(1-k_{i-1})}(1-a)^2 = k_i (1-a)^2.
\]
Using the value $a=0$ and looking at the final row $i=s$ we obtain
\[  \vb^T \ve - \vb^T \vc =  (\mA \ve)_s - (\mA \vc)_s    \leq k_s   < \frac{1}{2}.\]
If the method is at least first order, we must then have 
\[ \vb^T \vc >  \vb^T \ve - \frac{1}{2} = \frac{1}{2}.\]
We can then conclude that if 
\[\vb^T \vc+ \dot{\vb}^T \ve=\frac{1}{2}\]
and all the coefficients of $\mA$ are non-negative, then  $\dot{\vb}$ must have negative coefficients
or the method cannot be second order.
}

\rev{
This argument above shows that the conditions on the method lead to negative coefficients, and 
as both the forward Euler condition and either the second derivative or Taylor series condition
require positive coefficients on both the function and its derivative, the resulting method is not SSP.
Thus, implicit multi-derivative Runge--Kutta methods cannot be unconditionally SSP in the sense of
preserving the forward Euler and one of the derivative conditions above. This leads us
to consider the backward derivative condition.
}

\bibliography{hu_bibtex}

\end{document}